\documentclass[12pt]{amsart}
\usepackage{amsmath}
\usepackage{amssymb}
\usepackage{geometry}
\usepackage{graphicx}
\usepackage{amsthm}
\usepackage{setspace}
\usepackage[final]{showkeys}

\usepackage{mathrsfs}        
\usepackage{bbm}   
 
\def\bna{B_{\epsilon,n}(x)}
\def\bnb{\widetilde{B}_{\epsilon,n}(x)}
\def\bnc{\widetilde{\partial B}_{\epsilon,n}(x)}
\def\tb{\widetilde{B}}
\def\tbna{\tau_{\bna}}
\def\tbnb{\tau_{\bnb}}

\allowdisplaybreaks

\newcommand{\const}{{\text{const.}}}

\usepackage{epstopdf}
\usepackage{amsthm}
\usepackage{enumerate}

\usepackage{dsfont}

\usepackage{showlabels}

\newtheorem{thm}{Theorem}
\newtheorem{lemma}{Lemma}

\providecommand{\abs}[1]{\lvert \, #1 \, \rvert}

\title[Entry times distribution on metric spaces]{Entry times distribution for dynamical 
balls on metric spaces}
  \date{\today}

\begin{document}

\begin{abstract}
We show that the entry and return times for dynamic balls (Bowen balls) is exponential
for systems that have an $\alpha$-mixing invariant measure with certain regularities. We also show that systems modeled by Young's tower has exponential hitting time distribution for dynamical balls
\end{abstract}

\author{N Haydn}
\thanks{N Haydn, Department of Mathematics, University of Southern California,
Los Angeles, 90089-2532. E-mail: {\tt {nhaydn@usc.edu}}.} 
\author{F Yang}
\thanks{F Yang, Department of Mathematics, University of Southern California,
Los Angeles, 90089-2532. E-mail: {\tt {yang617@usc.edu}}.}

\maketitle

\section{Introduction}

In this paper the distribution of entry and return times are studied for continuous
maps on metric spaces. There are a great many results on the distribution
of return times to cylinder sets for instance from 1990 by Pitskel~\cite{Pit}
(see also~\cite{Den}) for Axiom A
maps using Markov partitions and also by Hirata. For $\psi$-mixing maps,
Galves and Schmitt~\cite{GS} have introduced a method to obtain the
 limiting distribution for the first entry time to cylinder sets. Abadi then
 generalised that approach to $\phi$ and also $\alpha$-mixing measures.
 The nature of the return set however is critical for the longtime statistics
 of return and Lacroix and Kupsa~\cite{L02,KL} have given examples where for an 
 ergodic system any return time distribution can be realised by taking 
 a limit along a suitably chosen sequence of return sets.
 For entry and returns to balls, Pitskel's result shows using an approximation
 argument that for Axiom A maps on the two-dimensional torus return times
 are Poisson distributed. Recently, Chazottes and Collet~\cite{CC13} showed a similar
 result for attractors in the case of exponentially decaying correlations.
 This was in~\cite{HW} extended to polynomial decay of correlations
 where the error terms are logarithmic. A similar result without error terms
 and requiring sufficient regularity of the invariant measure was proven in~\cite{PS}
 (see also~\cite{H13}).
 In this paper we consider another class of entry sets, namely dynamic balls
 on metric spaces. It has been shown elsewhere that dynamic balls exhibit 
 good limiting statistic and in particular have the equipartition property
 which for partitions is associated with the theorem of Shannan-McMillan-Breiman~\cite{Man}.
 Similarly, a theorem of Ornstein and Weiss~\cite{OW,OW2} has a counterpart for 
 metric balls~\cite{Var}:
 It was shown that the exponential growth rate of the re-entry times is equal to the 
 metric entropy.
 
 In this paper we first study the distribution of the first entry time assuming that
 there is a generating partition which is $\alpha$-mixing. This requirement is 
 satisfied by a large number of systems, in particular by those which allow
 Young's tower construction. This is shown in section~\ref{markov.towers}.
 In section~3 we deduce the first return time distribution.

\section{Main results}
Let $(X, d)$ be a compact metric space, $T:X\circlearrowleft$ and $(X, T , \mu)$ be a continuous 
transformation and $\mu$ a $T$-invariant Borel probability measure on $X$.   For a set $B\subset X$ 
denote by $\tau_B (y)$ the first time when the orbit  of $y$ enters $B$, i.e.
 $\tau_B(y) = \min\{j>0: T^jy \in B\}\in\mathbb{N}\cup\{\infty\}$. The first return time $\tau_B|_B$ is almost surely finite 
 by Poincar\'e's recurrence theorem and  satisfies $\int_B\tau_B\,d\mu=1$ by Kac's theorem 
 if $\mu$ is ergodic and $\mu(B)>0$. A large number of results on the limiting distribution have been
 proven in the case when $B$ are cylinder sets, most notably by Galves and Schmitt~\cite{GS}
 for $\psi$-mixing measures where they introduced a method which later was in particular by 
 Abadi~\cite{Abadi01,Abadi04} extended to the first entries and returns for $\phi$-mixing and
  $\alpha$-mixing measures.

If $T$ is a map on a metric space $\Omega$ 
with metric $d$, then the $n$th Bowen ball is given by 
$B_{\varepsilon,n}(x)=\{y\in\Omega: d(T^jx,T^jy)<\varepsilon, 0\le j<n\}$. 
Bowen balls are used to define the metric entropy and also the pressure 
for potentials
(see e.g.~\cite{Wal}). 
Then in~\cite{BK} the equivalent for the theorem of Shannon-McMillan-Breiman
was proven for Bowen balls. Namely
$$
h(\mu)=\lim_{\epsilon\to 0}\lim_{n\to\infty} \frac 1n\abs{\log\mu(B_{\epsilon,n}(x))}
$$
 for almost every $x$. 
In~\cite{Var} (see also~\cite{DW}) that for an ergodic $T$-invariant probability measure $\mu$
one has
$$
\lim_{\varepsilon\to0}\lim_{n\to\infty}\frac1n\log R_{\varepsilon,n}(x)=h(\mu)
$$
almost everywhere, where $R_{\varepsilon,n}(x)=\tau_{B_{\varepsilon,n}(x)}(x)$ is 
the recurrence time to the Bowen ball (the limit in $n$ is $\limsup$ or $\liminf$).
In this paper we want to address the distribution of the entry time function
$\tau_{B_{\varepsilon,n}(x)}$ and of the higher order returns $\tau_{B_{\varepsilon,n}(x)}^j$
which are defined by $\tau_{B_{\varepsilon,n}(x)}^1=\tau_{B_{\varepsilon,n}(x)}$
and recursively
 $\tau_{B_{\varepsilon,n}(x)}^{j+1}=\tau_{B_{\varepsilon,n}(x)}
 +\tau_{B_{\varepsilon,n}(x)}^j\circ T^{\tau_{B_{\varepsilon,n}(x)}}$.
 
 In order to get meaningful results we have to assume some mixing property.
 We will require the measure to be $\alpha$-mixing with respect to a finite or countably
 infinite generating partition.
 For that purpose let $\mathcal{A}$ be a measurable partition of $X$ which is generating and 
 either finite or 
 countably infinite. Denote by $\mathcal{A}^k=\bigvee_{j=0}^{k-1}T^{-j}\mathcal{A}$ its
 $n$-th join and write $\gamma_n=\mbox{diam}(\mathcal{A}^n)$ for its diameter. 
 Clearly $\gamma_n$ is a decreasing sequence.
 We shall require that the measure $\mu$ is $\alpha$-mixing with respect to a such
 a partition $\mathcal{A}$, that is 
$$
|\mu(A \cap T^{-n-k} B) - \mu(A)\mu(B)| \le \alpha(k)
$$
for all $A \in \sigma(\mathcal{A}^n)$, $B \in \sigma(\bigcup_j \mathcal{A}^j )$,
where $\alpha(k)$ is a decreasing function which converges to zero as $k\to\infty$.

In addition to a mixing property we will require some regularity of the measure. 
For $0 < \delta < \epsilon$  define the function
$$
\varphi (\epsilon , \delta, x) = \frac{\mu(B(x, \epsilon + \delta)) - \mu(B(x, \epsilon - \delta))}{\mu(B(x, \epsilon  ))} 
$$
for $x\in X$. The function $\varphi$ measures the proportion  of the measure of the annulus  
$B(x, \epsilon  + \delta) \setminus B(x, \epsilon - \delta)$ to the ball $B(x, \epsilon )$. 
This is needed in order to control the approximation of balls by cylinder sets below.

 For $\epsilon>0 $ and $ n \in \mathbb{N}$ we then define the $(\epsilon,n)$-Bowen ball 
 $$
 \bna = \{y: \sup\limits_{0 \le k \le n-1} d(T^kx, T^ky) < \epsilon \}.
 $$ 
 We now can formulate our main results.


\begin{thm}\label{thm2}  Let $\mu$ an $\alpha$-mixing $T$-invariant probability measure on
 $\Omega$.
 Assume that there exist constants $0<\gamma<1$ and $\zeta, \kappa>0$ such that $ \mbox{diam}(\mathcal{A}^n) =\mathcal{O}(\gamma^n)$, $\alpha(n)=\mathcal{O}(n^{-(2+\kappa)})$ and   
$$
\varphi(\epsilon, \delta, x) \le \frac{C_{\epsilon}}{|\log \delta|^{3+\zeta}}
$$
for some constant $C_{\epsilon}>0$ that does not depend on $x$ and all $\delta$ small enough.

Then there exists an $\omega>0$ and a constant $C_1$ so that
$$
\big| \mathbb{P}(\tau_{\bna}  > \frac{t}{\lambda_{\bna}\mu(\bna)} ) - e^{-t} \big| 
\le C_1\mu(\bna)^\omega.
$$ 
\end{thm}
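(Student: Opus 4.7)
My approach is to approximate the Bowen ball $\bna$ by a cylinder set $\bnb \in \sigma(\mathcal{A}^N)$ for $N = n + m$ with $m$ suitably chosen, apply Abadi's block method to $\bnb$ to obtain the exponential law in the cylinder case, and translate the result back to $\bna$ via the regularity hypothesis on $\varphi$. The three hypotheses -- the mixing rate $\alpha(n) = \mathcal{O}(n^{-(2+\kappa)})$, the geometric refinement $\diam(\mathcal{A}^n) = \mathcal{O}(\gamma^n)$, and the polylogarithmic bound on $\varphi$ -- play distinct roles in controlling three error terms that have to be made small simultaneously.

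For the cylinder approximation, since $\diam(\mathcal{A}^m) = \mathcal{O}(\gamma^m)$, I would fix $m$ with $\gamma^m < \delta$ and cover each metric ball $B(T^jx, \epsilon)$ by the union of those $\mathcal{A}^m$-atoms that meet $B(T^jx, \epsilon - \delta)$; the symmetric difference is contained in the annulus $B(T^jx, \epsilon + \delta) \setminus B(T^jx, \epsilon - \delta)$. Intersecting the time-$j$ preimages of these cylinder approximations over $0 \le j < n$ produces $\bnb \in \sigma(\mathcal{A}^N)$ with $N = n + m$. The symmetric difference $\bnc = \bna \,\triangle\, \bnb$ then lies in the union of the preimages of these annuli, intersected with the Bowen condition at the remaining times. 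The $\varphi$-hypothesis furnishes a relative bound $C_\epsilon/|\log\delta|^{3+\zeta}$ on each annulus, and $\alpha$-mixing is used to factor the joint Bowen-annulus measure; together these should give a bound of the form $\mu(\bnc) \le \mu(\bna)^{1+\omega_1}$ for some $\omega_1 > 0$.

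For the cylinder set $\bnb$ I would then apply the standard block argument: split $[0, t/(\lambda_{\bnb}\mu(\bnb))]$ into alternating blocks of length $\Delta$ and gaps of length $g$, and iterate the decorrelation identity
\[
\mathbb{P}(\tau_{\bnb} > \Delta + g) = \mathbb{P}(\tau_{\bnb} > \Delta)\,\mathbb{P}(\tau_{\bnb} > g) + \mathcal{O}(\alpha(g))
\]
across the blocks to obtain the product approximation $(1 - \lambda_{\bnb}\mu(\bnb)\Delta)^{t/(\lambda_{\bnb}\mu(\bnb)\Delta)}$, which converges to $e^{-t}$. The extremal-index-like factor $\lambda_{\bnb}$ absorbs the contribution of short returns within a single block. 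Under the polynomial decay $\alpha(k) = \mathcal{O}(k^{-(2+\kappa)})$, choosing $g$ and $\Delta$ as suitable powers of $\mu(\bnb)^{-1}$ produces a block-method error of order $\mu(\bnb)^{\omega_2}$ for some $\omega_2 > 0$.

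The two parts are combined by $T$-invariance of $\mu$,
\[
\big|\mathbb{P}(\tau_{\bna} > s) - \mathbb{P}(\tau_{\bnb} > s)\big| \le s\,\mu(\bnc),
\]
so that for $s = t/(\lambda_{\bna}\mu(\bna))$, with $\lambda_{\bna} \asymp \lambda_{\bnb}$ and $\mu(\bna) \asymp \mu(\bnb)$, the approximation error is absorbed into $C_1\mu(\bna)^\omega$ with $\omega = \min(\omega_1, \omega_2)$. The main obstacle will be the joint calibration of the three scales $m$, $\Delta$, $g$. Because $\mu(\bna)$ is exponentially small in $n$ while $\varphi$ decays only polylogarithmically in $\delta$, one must take $\delta$ of order $\mu(\bna)^\eta$ for a small $\eta > 0$, giving $|\log\delta|^{3+\zeta} \asymp n^{3+\zeta}$, and it is then the $\alpha$-mixing factorisation of the Bowen annulus that promotes the resulting $n^{-(3+\zeta)}$ bound on each annulus into a genuine power of $\mu(\bna)$. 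The exponents $3+\zeta$ and $2+\kappa$ in the hypotheses are precisely what is needed for this three-way balance to succeed.
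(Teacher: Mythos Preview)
Your overall architecture --- approximate $\bna$ by a union $\bnb$ of $N$-cylinders, run the Abadi block method on $\bnb$, and compare via $|\mathbb{P}(\tau_{\bna}>s)-\mathbb{P}(\tau_{\bnb}>s)|\le s\,\mu(\bnc)$ --- matches the paper's. The gap is in the calibration of $N$ and in the role you assign to $\alpha$-mixing.

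You take $N=n+m$ with $\gamma^m\asymp\delta=\mu(\bna)^\eta$, which forces $m\asymp n$ and hence $N\asymp n$. With this choice $|\log\delta|\asymp n$, so the hypothesis on $\varphi$ delivers only $\varphi\lesssim n^{-(3+\zeta)}$. A union bound then yields $\mu(\bnc)\lesssim n\cdot n^{-(3+\zeta)}=n^{-(2+\zeta)}$, which is merely polylogarithmic in $\mu(\bna)$, not the power $\mu(\bna)^{1+\omega_1}$ you claim. You appeal to an ``$\alpha$-mixing factorisation of the Bowen annulus'' to promote this to a power of $\mu(\bna)$, but there is no time gap: the annulus event at coordinate $k$ and the Bowen constraints occupy the same time window $[0,n-1]$, so $\alpha$-mixing gives no handle on $\mu\bigl(\bna\cap T^{-k}(\text{annulus}_k)\bigr)$. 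Moreover, with $N\asymp n$ the block-method mixing error $\alpha(\Delta-N)\asymp n^{-(2+\kappa)}$ is again only polylogarithmic in $\mu(\bna)$, so even the cylinder part of your argument would not produce a power rate.

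The paper's remedy is to take $N(n)=[\mu(\bna)^{-\eta}]$ for some $\eta\in(0,\tfrac12)$ --- exponentially larger than $n$. Then $\gamma_{N(n)}\asymp\gamma^{\mu(\bna)^{-\eta}}$, so $|\log\gamma_{N(n)}|\asymp\mu(\bna)^{-\eta}$ and the hypothesis on $\varphi$ gives directly $\varphi\le C_\epsilon\mu(\bna)^{(3+\zeta)\eta}$, a genuine power. No mixing is used for the annulus; the plain union bound $\mu(\bnc)\le n\sup_k\varphi(\epsilon,\gamma_{N(n)-k},T^kx)$ already suffices (this is Lemma~\ref{l5}). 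Likewise $\alpha(N(n))\asymp\mu(\bna)^{(2+\kappa)\eta}$ feeds correctly into the block error. The only role of $\alpha$-mixing is the standard one inside the block iteration (Lemmas~\ref{l2}--\ref{l3}) and in bounding $\lambda_{\bnb}$ from below via $s=\alpha^{-1}(C'\mu(\bnb))+N(n)$ (Lemma~\ref{l4}). With $f=\mu(\bna)^{-\beta}$ for $\beta\in(\eta,1)$ close to $1$ and $\eta$ close to $\tfrac12$, all four error terms in Theorem~\ref{mainthm} become $\le C\mu(\bna)^\omega$.
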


For simplicity let us put $B=\bna$. If, as in the next result, the measure has 
good regularity then we relax the condition on the decrease of the diameter of 
cylinders considerably.

\begin{thm} \label{thm3}
Assume that there exist constants, $a,\kappa,\zeta>0$ satisfying $a\zeta>3$, such that 
$ \mbox{diam}(\mathcal{A}^n)=\mathcal{O}(n^{-a})$, $\alpha(n) =\mathcal{O}(n^{-(2+\kappa)})$ and 
$$
\varphi(\epsilon, \delta, x) \le C_{\epsilon} \delta^{\zeta}
$$
for some constant $C_\epsilon$.

Then there exists an $\omega>0$ and a constant $C_2$ so that 
$$
 \big| \mathbb{P}(\tau_{\bna}  > \frac{t}{\lambda_{\bna}\mu(\bna)} ) - e^{-t} \big|
\le C_2\mu(\bna)^\omega.
$$ 
\end{thm}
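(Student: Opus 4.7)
The strategy is to adapt the proof of Theorem~\ref{thm2} by replacing the Bowen ball $B:=\bna$ by a cylinder approximant $\bnb$ built from the refined partition $\mathcal{A}^{n+N}$, to run the $\alpha$-mixing blocking argument on $\bnb$, and then to transfer the resulting exponential law back to $B$ using the polynomial regularity hypothesis.

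Fix a scale $\delta=\mu(B)^{\beta}$ with $\beta>0$ to be chosen at the end, and set $N=\lceil\delta^{-1/a}\rceil$ so that $\diam(\mathcal{A}^{N})=\mathcal{O}(\delta)$. Since $\mathcal{A}^{n+N}$ refines $T^{-j}\mathcal{A}^{N}$ for every $0\le j\le n$, each atom $A\in\mathcal{A}^{n+N}$ satisfies $\diam(T^jA)\lesssim\delta$ on the first $n$ iterates. Taking
\[
\bnb=\bigcup\bigl\{A\in\mathcal{A}^{n+N}:A\subseteq B\bigr\},
\]
one checks that $B_{\epsilon-\delta,n}(x)\subseteq\bnb\subseteq B$, and hence
\[
\mu(B\setminus\bnb)\le\mu\bigl(B\setminus B_{\epsilon-\delta,n}(x)\bigr)\le\sum_{j=0}^{n-1}\mu\bigl(B(T^jx,\epsilon)\setminus B(T^jx,\epsilon-\delta)\bigr)\le C_\epsilon\,n\,\delta^{\zeta}
\]
by invariance of $\mu$ together with the hypothesis $\varphi(\epsilon,\delta,\cdot)\le C_\epsilon\delta^\zeta$. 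Using the entropy-type bound $n=\mathcal{O}(|\log\mu(B)|)$, the right-hand side is $\mathcal{O}(\mu(B)^{\beta\zeta-o(1)})$.

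Next I would rerun the Galves--Schmitt/Abadi blocking procedure underlying Theorem~\ref{thm2}, now applied to the cylinder union $\bnb\in\sigma(\mathcal{A}^{n+N})$. Choose a gap $g$ satisfying $n+N\le g\ll\mu(B)^{-1}$, partition the interval $[0,\,t/(\lambda_{\bnb}\mu(\bnb))]$ into blocks of length $g$ separated by gaps of length $g$, and apply the $\alpha$-mixing bound at each cut. Because $\alpha(g)=\mathcal{O}(g^{-(2+\kappa)})$, the accumulated mixing error telescopes to
\[
\left|\mathbb{P}\!\left(\tbnb>\frac{t}{\lambda_{\bnb}\mu(\bnb)}\right)-e^{-t}\right|\le C\,\mu(\bnb)^{\omega'}
\]
for some $\omega'>0$. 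The exponential law is then transferred to $B$ by the deterministic estimate $|\mathbb{P}(\tbna>s)-\mathbb{P}(\tbnb>s)|\le s\,\mu(B\setminus\bnb)$, obtained by a union bound on the orbits that hit $B\setminus\bnb$ before time $s$; evaluated at $s=t/(\lambda_B\mu(B))$ this costs an additional error of order $\mu(B\setminus\bnb)/\mu(B)$.

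The main obstacle is to reconcile the two competing constraints on $\delta$. The transfer error demands $n\delta^\zeta\ll\mu(B)^{1+\omega}$, i.e.\ $\beta\zeta>1+\omega$ up to logarithmic factors, whereas the blocking argument requires $N\approx\delta^{-1/a}<g$ with $\alpha(g)/\mu(B)\ll\mu(B)^{\omega}$, forcing $\beta a(2+\kappa)<1+\omega$. A feasible $\beta$ exists precisely when $a\zeta>2+\kappa$, and the strict inequality $a\zeta>3$ in the hypothesis provides exactly the slack needed to absorb the $|\log\mu(B)|$ factor coming from $n$ and the Kac-type normalisation $\lambda_B$. With $\beta$ chosen inside the admissible range, the approximation, transfer, and mixing errors combine to the claimed bound $C_2\mu(\bna)^\omega$.
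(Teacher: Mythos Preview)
Your approach---approximate $B$ from inside by a union of cylinders, run the $\alpha$-mixing block argument on the approximant, then transfer back via an annulus estimate---is exactly the paper's strategy. The paper packages the blocking part as Theorem~\ref{mainthm} (built on Lemmata~\ref{l1}--\ref{l5}) and then Theorem~\ref{thm3} reduces to verifying condition~\eqref{qn1} and balancing parameters, just as you do.

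Two points need tightening. First, the claim $n=\mathcal{O}(|\log\mu(B)|)$ is not a hypothesis of the theorem and does not follow from the stated assumptions; the paper's proof also carries a factor of $n$ in its $\vartheta_n$ estimate and is equally informal here, so this is not a gap unique to your argument, but you should not present it as an established bound.

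Second, and more substantively, your final feasibility analysis is off. The blocking constraint is not ``$\beta a(2+\kappa)<1+\omega$''. In the paper's parametrisation one takes the cylinder length $N(n)=\mu(B)^{-\eta}$ (your $\eta\approx\beta/a$); the blocking error terms $sN(n)/f$ and $s\alpha(N(n))/(f\mu(\tb))$ force $\eta<\tfrac12$, while the annulus bound (your transfer step) requires
\[
a\zeta\eta \;>\; 1+\max\!\left\{\tfrac{1}{2+\kappa},\,\eta\right\}.
\]
Letting $\eta\uparrow\tfrac12$ gives the threshold $a\zeta>3$ directly---it is not a matter of ``slack for logarithms'' on top of $a\zeta>2+\kappa$. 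Your two inequalities as written are both lower bounds on the same side and do not produce a nontrivial compatibility condition; revisiting the error terms in Lemma~\ref{l3} and Lemma~\ref{l4} (the $s$ in the denominator of $\mathbb{P}(\tau_{\tb}\le f)$ is what forces $\eta<\tfrac12$) will give you the correct pair of competing constraints.
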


While the previous two theorems give us limiting results for the entry times distribution,
the next two theorems establish equivalent results for the return times.

For all set $A\subset\Omega$ define the {\em period} of $A$ by
 $\tau(A) = \min\{k>0: T^{-k}A \cap A \ne \emptyset\}$ and put for any $\Delta<1/\mu(A)$
$$
a_{A} = \mathbb{P}_{A}(\tau_{A}> \tau(A) + \Delta).
$$
In our setting we will choose $N(n) \le \Delta \le 1/\mu(\bnb) $, where $N(n)$ will be determined
later. Again we write for simplicity $\tb = \bnb$ and $B = \bna$.

\begin{thm}\label{B_return} Let $\mu$ an $\alpha$-mixing.
Assume that there is a $\zeta>0$ so that 
$$
\varphi(\epsilon, \delta, x) \le \frac{C_{\epsilon}}{|\log \delta|^{5+\zeta}}
$$r
for some constant $C_\epsilon$. The remaining conditions are as in Theorem~\ref{thm2}.

Then there exists an $\omega>0$ so that 
$$
\left| \mathbb{P}_B(\tau_B > \frac{t}{\lambda_B \mu(B)}) - a_Be^{-t} \right|
\le C_3\mu(B)^\omega
$$
for some constant $C_3$ and a parameter $\lambda_B$ which is bounded as in
Lemma~\ref{l4}.
\end{thm}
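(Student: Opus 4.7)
The plan is to reduce the return time distribution to the entry time estimate already established in Theorem~\ref{thm2}. The key observation is that the ``cleaned up'' subset $\mathcal{T}_B = B \cap \{\tau_B > \tau(B) + \Delta\}$, from which points with quick returns have been excised, satisfies $\mu(\mathcal{T}_B) = a_B \mu(B)$, and for any $s > \tau(B) + \Delta$ one has $B \cap \{\tau_B > s\} = \mathcal{T}_B \cap \{\tau_B > s\}$. Taking $s = t/(\lambda_B \mu(B))$, which is much larger than $\tau(B) + \Delta$ whenever $t$ is bounded away from zero, the task becomes to prove
$$
\mu(\mathcal{T}_B \cap \{\tau_B > s\}) \approx a_B \mu(B) \, e^{-t}.
$$

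To estimate this, we decompose the tail event as $\{\tau_B > s\} = \{\tau_B > m\} \cap T^{-m}\{\tau_B > s - m\}$ with $m = \tau(B) + \Delta$, obtaining
$$
\mu(\mathcal{T}_B \cap \{\tau_B > s\}) = \mu(\mathcal{T}_B \cap T^{-m}\{\tau_B > s - m\}).
$$
Next, we replace the Bowen ball $B = \bna$ by the cylinder $\tilde B = \bnb$, using the regularity hypothesis on $\varphi$ to control the measures of the annular symmetric difference $B \triangle \tilde B$ and of the associated defect events; the strengthened exponent $5+\zeta$ (as opposed to $3+\zeta$ in Theorem~\ref{thm2}) compensates for the extra approximations coming from the dead-time window of length $\Delta$ and for the fact that $\mathcal{T}_B$ itself must be approximated by a cylinder event. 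Inserting a further gap $g$ before the shifted tail event and invoking $\alpha$-mixing,
$$
\mu(\mathcal{T}_{\tilde B} \cap T^{-(m+g)}\{\tau_{\tilde B} > s - m - g\}) = \mu(\mathcal{T}_{\tilde B}) \, \mu(\tau_{\tilde B} > s - m - g) + O(\alpha(g)).
$$
The right-hand factor is $\approx e^{-t}$ by Theorem~\ref{thm2}, since $m + g$ is of lower order than $s$, and combining with $\mu(\mathcal{T}_{\tilde B}) \approx a_B \mu(B)$ produces the claimed $a_B e^{-t}$ limit.

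The main technical obstacle is the simultaneous optimization of three scales: the mixing gap $g$ (which must satisfy $\alpha(g) \lesssim \mu(B)^\omega$), the cylinder refinement level $k$ (which must make $\mu(B \triangle \tilde B)$ comparable), and the dead time $\Delta \in [N(n), 1/\mu(\tb)]$ (which must exceed the correlation length to permit mixing past the period of $B$, yet remain small enough that $a_B$ is bounded away from zero). The strengthened hypothesis on $\varphi$ provides precisely the additional logarithmic slack needed to close all three error terms into a single bound of the form $\mu(B)^\omega$. Finally, the bound on $\lambda_B$ supplied by Lemma~\ref{l4}, together with a companion estimate showing $a_B$ is bounded below, ensures that division by $\mu(\mathcal{T}_B) = a_B \mu(B)$ in passing back to $\mathbb{P}_B$ does not inflate the error.
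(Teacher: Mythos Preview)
Your outline is essentially the paper's own argument: split off the initial window $\tau(\tb)+\Delta$, use a conditional decorrelation (the paper's Lemma~\ref{l2condition}) to factor $\mathbb{P}_{\tb}(\tau_{\tb}>u)\approx a_{\tb}\,\mathbb{P}(\tau_{\tb}>u-(\tau(\tb)+\Delta))$, apply Theorem~\ref{thm2} to the unconditional tail, and then transfer from $\tb$ to $B$ via Lemma~\ref{l8}. The paper organizes this slightly differently, carrying out the entire argument for the cylinder approximation $\tb$ first and invoking Lemma~\ref{l8} only at the end, rather than interleaving the $B\to\tb$ replacement with the mixing step as you do.

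Two corrections to your last paragraph. First, in passing to $\mathbb{P}_B$ you divide by $\mu(B)$, not by $\mu(\mathcal{T}_B)=a_B\mu(B)$: the identity $B\cap\{\tau_B>s\}=\mathcal{T}_B\cap\{\tau_B>s\}$ already gives $\mathbb{P}_B(\tau_B>s)=\mu(\mathcal{T}_B\cap\{\tau_B>s\})/\mu(B)$, so no lower bound on $a_B$ is required and the theorem as stated holds even when $a_B$ is small. Second, your diagnosis of why the exponent must be strengthened to $5+\zeta$ is slightly off. The dead-time window and the approximation of $\mathcal{T}_B$ contribute only lower-order terms; the real cost is the single extra factor $1/\mu(B)$ that appears in Lemma~\ref{l8} when comparing the \emph{conditional} laws $\mathbb{P}_B$ and $\mathbb{P}_{\tb}$ (the annulus error $\vartheta_n(\epsilon)$ there is divided by $\mu(B)$ rather than multiplied by it as in the entry-time case). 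With $N(n)=\mu(B)^{-\eta}$ this produces the requirement $(5+\zeta)\eta-2-\max\{\frac{1}{2+\kappa},\eta\}>0$, which forces the $5+\zeta$ in place of $3+\zeta$.
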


\begin{thm}\label{B_return2}
Assume  there are $a,\kappa,\zeta>0$ satisfying $a\zeta>5$, such that 
$ \mbox{diam}(\mathcal{A}^n)=\mathcal{O}(n^{-a})$, $\alpha(n) =\mathcal{O}(n^{-(2+\kappa)})$ and 
$$
\varphi(\epsilon, \delta, x) \le C_{\epsilon}\delta^\zeta.
$$
The remaining conditions are as in Theorem~\ref{thm3}.

Then there exists an $\omega>0$ so that 
$$
\left| \mathbb{P}_B(\tau_B > \frac{t}{\lambda_B \mu(B)}) - a_Be^{-t} \right|
\le C_4\mu(B)^\omega
$$
for some constant $C_4$.
\end{thm}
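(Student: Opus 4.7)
The plan is to derive Theorem~\ref{B_return2} from the entry-time result Theorem~\ref{thm3} via the Galves--Schmitt--Abadi conditioning scheme, with the sharpened hypothesis $a\zeta>5$ accommodating additional annular approximations that arise in the return-time setting. Write $s=t/(\lambda_B\mu(B))$, $B=\bna$, $\tb=\bnb$, and choose $\Delta=N(n)$ with $\tau(B)+\Delta\ll s$. For $s>\tau(B)+\Delta$ we have by the very definition of $a_B$
$$
\mathbb{P}_B(\tau_B>s) = a_B\cdot \mathbb{P}_B\big(\tau_B>s \,\big|\, \tau_B>\tau(B)+\Delta\big),
$$
so the task reduces to showing that the conditional factor is within $\mu(B)^\omega$ of $e^{-t}$.

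The key step is to replace this conditional probability by the unconditioned probability $\mathbb{P}(\tau_B>s)$. To do so I would push the initial distribution forward by $T^{\tau(B)+\Delta}$ and decouple the conditioning event $\{\tau_B>\tau(B)+\Delta\}$ from the tail event $\{\tau_B>s\}$ via the $\alpha$-mixing hypothesis with rate $\alpha(n)=\mathcal{O}(n^{-(2+\kappa)})$. Since mixing is stated on $\sigma(\mathcal{A}^k)$-measurable sets, $B$ must first be approximated from outside and inside by unions of atoms of a suitable cylinder partition, incurring an error governed by the annular regularity $\varphi(\epsilon,\delta,x)\le C_\epsilon\delta^\zeta$; at scale $\delta\asymp\gamma_{n+\Delta}$ this contributes $\mathcal{O}((n+\Delta)^{-a\zeta})$ up to polynomial combinatorial factors. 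In the return setting there are two extra layers of such approximation compared with the entry setting of Theorem~\ref{thm3}, namely the passage between $B$ and $\tb$ needed to fit the short-return window $\tau(B)+\Delta$ into a $\sigma$-algebra compatible with the mixing estimate, and the additional cylinder-length cost of the longer conditioning event. The combined bookkeeping loses two extra powers of the small scale, which is precisely why $a\zeta>5$ replaces the $a\zeta>3$ of Theorem~\ref{thm3}.

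Once the conditional probability has been replaced by $\mathbb{P}(\tau_B>s)$ up to error $\mathcal{O}(\mu(B)^{\omega})$, Theorem~\ref{thm3} replaces the latter by $e^{-t}$ modulo the same order of error, and multiplying by $a_B\le1$ yields the stated bound, possibly with a slightly smaller $\omega$. The main obstacle is the calibration of $\Delta$: it must be large enough that $\alpha(\Delta)$ becomes a genuine power of $\mu(B)$ after one re-expresses $n$ in terms of $|\log\mu(B)|$, yet small enough that the accumulated $\varphi$-errors stay below the same threshold. Under merely polynomial diameter and mixing decay this balance is delicate, and it is the strict inequality $a\zeta>5$ that leaves enough margin for every error term to be absorbed into a single $\mu(B)^\omega$ bound.
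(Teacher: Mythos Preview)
Your outline is essentially the paper's argument, though organized differently. The paper first passes once and for all from $B$ to the cylinder approximation $\tb=\bnb$ via Lemma~\ref{l8}, proves the exponential return law for $\tb$ (where $\alpha$-mixing applies directly) using the conditional product estimate of Lemma~\ref{l2condition} together with Theorem~\ref{thm3}, and only then transfers back to $B$. You instead propose to keep $B$ throughout and approximate by cylinders each time mixing is invoked; this works but repeats the annulus bookkeeping that the paper isolates in a single comparison lemma.

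One point worth sharpening: your explanation of why $a\zeta>5$ replaces $a\zeta>3$ as ``two extra layers of approximation'' is not quite the mechanism in the paper. The extra cost comes from a single division by $\mu(B)$: Lemma~\ref{l8} bounds $\bigl|\mathbb{P}_B(\tau_B>\cdot)-\mathbb{P}_{\tb}(\tau_{\tb}>\cdot)\bigr|$ by $C_8\,t\,\vartheta_n(\epsilon)/\mu(B)$, whereas in the entry-time theorem the corresponding $\vartheta_n$-term appears without the $1/\mu(B)$ factor. With $N(n)=\mu(B)^{-\eta}$ and $\vartheta_n(\epsilon)\lesssim ns\,\mu(B)^{a\zeta\eta-1}$, the constraint shifts from $a\zeta\eta-1-\max\{\tfrac{1}{2+\kappa},\eta\}>0$ to $a\zeta\eta-2-\max\{\tfrac{1}{2+\kappa},\eta\}>0$, and taking $\eta$ close to $\tfrac12$ turns this into $a\zeta>5$.
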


 The next result will be our principal technical result on which all the other theorems
 are based. For that purpose let $N(n)$ be an increasing sequence. 
 We  want to approximate the Bowen balls$\bna$ by a unions
  on $N(n)$-cylinders from the inside. For this purpose put 
$$
\bnb= \bigcup\limits_{A^{N(n)} \in \mathcal{A}^{N(n)}, A^{N(n)} \subset \bna } A^{N(n)}
$$ 
which is the largest  union of all $N(n)$-cylinders contained in $\bna$. The following is
 our main result.

\begin{thm}\label{mainthm} Let $\mu$ be an $\alpha$-mixing $T$-invariant probability measure
on $\Omega$. Assume there exist $ \epsilon_0>0$ and an increasing sequence 
$\{N(n)\}_{n=1}^{\infty}$ satisfying $n<N(n) < \frac{1}{4}\mu(\bna)^{-1}$ such that
\begin{equation}{\label{qn1}}
 \varphi(\epsilon , \gamma_{N(n)-k} , T^k x) \le\vartheta_n(\epsilon) \cdot \frac{\mu(\bna)}{ns}
\end{equation}
for all $\epsilon < \epsilon_0$, $x \in X$, $0\le k\le n-1$, where $s = \alpha^{-1}(C'\mu(\tb))+N(n)$
 for some $0<C'<1$ and $\vartheta_n(\epsilon)\to0$ as $n\to\infty$ for every $\epsilon$.

 Then there exist  $\lambda_{\bna}$   with $\frac{C}{s} < \lambda_{\bna}< 2$ and constants 
 $C_5,C_6$ such that   
\begin{eqnarray*}
  \left| \mathbb{P}(\tau_{\bna}  > \frac{t}{\lambda_{\bna}\mu(\bna)} ) - e^{-t} \right| \hspace{-4cm}&&\\
  &\le &\vartheta_n(\epsilon)\frac{t}{s\lambda_{\bna}}+ 2f\mu(\bna) + C_5 \frac{sN(n)}{f} + C_6s\frac{\alpha(N(n))}{f\mu(\tb)}
\end{eqnarray*}
 for all $f\in\left(2N(n),\frac12\mu(\bna)^{-1}\right)$.
\end{thm}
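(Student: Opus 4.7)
The plan is to prove the theorem in two stages. First I replace the Bowen ball $\bna$ by its cylinder hull $\bnb\in\sigma(\mathcal{A}^{N(n)})$, incurring an error controlled by hypothesis (\ref{qn1}). Second I run a Galves--Schmitt/Abadi-style blocking argument on $\bnb$, for which the $\alpha$-mixing hypothesis applies directly. The free parameter $f$ and the length scale $s$ are calibrated so that the four error terms in the statement balance.

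For the first stage, $\bnb\subset\bna$ gives $\{\tau_{\bna}>t\}\subset\{\tau_{\bnb}>t\}$ automatically, so only the reverse inclusion costs something. If $y\in\bna\setminus\bnb$, some point $z$ in the $N(n)$-cylinder of $y$ must leave $\bna$ at some time $k<n$; combining $d(T^k y,T^k z)\le\gamma_{N(n)-k}$ with $d(T^k z,T^k x)\ge\epsilon>d(T^k y,T^k x)$ forces $T^k y$ into the annulus $B(T^k x,\epsilon)\setminus B(T^k x,\epsilon-\gamma_{N(n)-k})$. By $T$-invariance and (\ref{qn1}) the measure of these bad points is at most $\sum_{k=0}^{n-1}\varphi(\epsilon,\gamma_{N(n)-k},T^k x)\mu(B(T^k x,\epsilon))\le C\vartheta_n(\epsilon)\mu(\bna)/s$. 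Multiplying by the expected number of trials $t/(\lambda_{\bna}\mu(\bna))$ in the observation window yields the first error term $\vartheta_n(\epsilon)\,t/(s\lambda_{\bna})$.

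For the second stage, I partition $[0,t/(\lambda_{\bna}\mu(\bna))]$ into $K$ blocks of length $f-N(n)$ separated by gaps of length $N(n)$, and let $E_j$ denote the event of no entry into $\bnb$ during the $j$th block. Each $E_j$ is measurable with respect to a sub-$\sigma$-algebra separated from the next by $N(n)$ steps, so iterating the $\alpha$-mixing inequality gives $|\mathbb{P}(\bigcap_j E_j)-\prod_j\mathbb{P}(E_j)|\le K\alpha(N(n))$, which upon substituting $K\asymp s/(f\mu(\tb))$ becomes the fourth error term. Dropping the gaps from the original entry-time event costs $K\cdot N(n)\mu(\bnb)\asymp sN(n)/f$, the third term. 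Within a single block one has $\mathbb{P}(E_j)=1-\lambda_{\bnb}\mu(\bnb)f+O((f\mu(\bnb))^2)$, where the rate $\lambda_{\bnb}$ encodes the short-return behaviour; its lower bound $C/s$ is read off from the definition $s=\alpha^{-1}(C'\mu(\tb))+N(n)$ as the mixing scale. Raising $\mathbb{P}(E_j)$ to the $K$-th power, the quadratic linearization error accumulates to $2f\mu(\bna)$, the second term, and the leading order $(1-\lambda_{\bnb}\mu(\bnb)f)^{T/f}\to e^{-t}$ produces the exponential limit.

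The main difficulty lies in the scaling of stage one: the annular set $\bna\setminus\bnb$ must have measure small relative to $\mu(\bna)/s$ and not merely relative to $\mu(\bna)$, because the observation window contains on the order of $s/\mu(\bna)$ independent trials; condition (\ref{qn1}) is tailored precisely for this scaling. Verifying via $T$-invariance that the $n$ annular contributions add without cumulative overlap, and then matching $\lambda_{\bnb}$ to a range that keeps the quadratic expansion in stage two consistent with the gap and mixing errors, are the two places where real care is required. Once these scales align, the remainder is a routine application of the Abadi-style machinery.
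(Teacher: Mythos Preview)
Your two–stage outline matches the paper's architecture: first pass from $\bna$ to the cylinder approximation $\bnb$ via the annular estimate (this is the content of the paper's Lemma~5 and the subsequent comparison lemma, and your description of it is correct), then run a blocking argument on $\bnb$. The gap is in the second stage.

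You partition the observation window into $K$ blocks of length $\approx f$ and claim $K\asymp s/(f\mu(\tb))$. But $K$ is the window length divided by $f$, i.e.\ $K=\dfrac{t}{\lambda_B\mu(\tb)f}$, and the lower bound $\lambda_B\ge C/s$ only gives $K\lesssim \dfrac{ts}{f\mu(\tb)}$. The factor $t$ cannot be dropped. Consequently your bounds $K\alpha(N(n))$ and $K\,N(n)\mu(\tb)$ produce $t$-dependent error terms $\dfrac{ts\,\alpha(N(n))}{f\mu(\tb)}$ and $\dfrac{ts\,N(n)}{f}$, not the $t$-free third and fourth terms in the statement. The same issue affects your account of the second term: accumulating a quadratic correction $O((f\mu)^2)$ over $K$ factors yields $O(K(f\mu)^2)=O(tf\mu)$, again with an unwanted $t$.

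The paper eliminates the $t$-dependence by a different bookkeeping (its Lemma~3). Instead of comparing $\mathbb{P}(\bigcap_j E_j)$ with $\prod_j\mathbb{P}(E_j)$, it compares $\mathbb{P}(\tau_{\tb}>kf)$ directly with $e^{-\theta k}$ where $e^{-\theta}=\mathbb{P}(\tau_{\tb}>f)$, via the recursion
\[
\bigl|\mathbb{P}(\tau_{\tb}>(k+1)f)-e^{-\theta(k+1)}\bigr|
\le (\text{one-step error})+e^{-\theta}\bigl|\mathbb{P}(\tau_{\tb}>kf)-e^{-\theta k}\bigr|.
\]
Because the previous error is multiplied by $e^{-\theta}<1$ at each step, the total is a geometric series summing to $\dfrac{1}{1-e^{-\theta}}=\dfrac{1}{\mathbb{P}(\tau_{\tb}\le f)}$, which is bounded by $\dfrac{s}{C_7 f\mu(\tb)}$ via the lower bound on $\lambda_B$ and is independent of $k$ (hence of $t$). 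The $2f\mu(\bna)$ term then arises not from linearization but from the residual block of length $r<f$ and a mean-value estimate on $|e^{-\lambda_B\mu(\tb)t}-e^{-\lambda_B\mu(\tb)kf}|$. A minor additional point: with gaps of length exactly $N(n)$ and cylinders of length $N(n)$, the mixing bound is $\alpha(0)$; the paper takes $\Delta=2N(n)$ so that the effective gap is $\Delta-N(n)=N(n)$.
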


\section{First hitting time distribution for Bowen balls}
In this section we will prove Theorems~\ref{thm2} and~\ref{thm3},
but  first we state several lemmata, the first one of which is evident.

\begin{lemma}\label{l1} For all $n$ and $x$ we have $\sum_{k=1}^{N(n)} \mu(\bnb \cap T^{-k}\bnb) \le N(n)\mu(\bnb) $ and $\mathbb{P}(\tbnb \le t) \le t\mu(\bnb)$.
\end{lemma}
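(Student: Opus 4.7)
The plan is straightforward in both parts; the author himself flags the lemma as evident, so I expect no real obstruction. For the first inequality I will simply use monotonicity of the measure: the set $\bnb \cap T^{-k}\bnb$ is contained in $\bnb$, hence each summand is bounded by $\mu(\bnb)$, and summing over $k=1,\dots,N(n)$ yields the stated bound $N(n)\mu(\bnb)$. No invariance or mixing is used; the inequality is purely set-theoretic.

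For the second inequality I would write the event $\{\tbnb\le t\}$ as a union of preimages. Since $\tbnb(y)=\min\{k>0:T^ky\in\bnb\}$, we have
$$
\{y:\tbnb(y)\le t\}=\bigcup_{k=1}^{\lfloor t\rfloor} T^{-k}\bnb.
$$
Applying $\sigma$-subadditivity of $\mu$ and the $T$-invariance $\mu(T^{-k}\bnb)=\mu(\bnb)$ gives
$$
\mathbb{P}(\tbnb\le t)\le\sum_{k=1}^{\lfloor t\rfloor}\mu(T^{-k}\bnb)=\lfloor t\rfloor\mu(\bnb)\le t\mu(\bnb),
$$
which is the claim.

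The only point worth mentioning is that $t$ need not be an integer in the statement, but the floor in the union above absorbs this harmlessly. Since neither bound uses the $\alpha$-mixing hypothesis, the partition diameter decay, nor any regularity of $\varphi$, there really is no hard step: the lemma is a bookkeeping fact that will later be applied to control short-time hits to $\bnb$ in the proofs of the main theorems.
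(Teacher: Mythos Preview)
Your argument is correct and is exactly the kind of routine verification the paper has in mind when it calls the lemma ``evident'' and omits a proof. Both parts follow, as you note, from monotonicity, subadditivity, and $T$-invariance of $\mu$, with no use of mixing or regularity.
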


To simply notation, we fix $\epsilon$ and $n$ for a moment  and  write $B = \bna$ and  $\tb = \bnb$.

\begin{lemma}\label{l2} For all $\Delta,f$ such that $f \ge \Delta > N(n)$ and $g \in \mathbb{N}$ we have
$$
\left| \mathbb{P}(\tau_{\tb} > f+g) - \mathbb{P}(\tau_{\tb} > g) \mathbb{P}(\tau_{\tb} > f)  \right| \le 2 \Delta\mu(\bnb) + \alpha(\Delta - N(n))
$$
\end{lemma}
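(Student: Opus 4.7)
The plan is to split the event $\{\tau_{\tb}>f+g\}$ into a "far past" piece and a "far future" piece separated by a gap of length at least $\Delta-N(n)$, apply the $\alpha$-mixing hypothesis across the gap, and then absorb the two truncation errors.

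First, I would observe the exact decomposition $\{\tau_{\tb}>f+g\}=\{\tau_{\tb}>f\}\cap T^{-f}\{\tau_{\tb}>g\}$. Since the union of $N(n)$-cylinders in $\tb$ means that each event $\{T^j x\notin\tb\}$ depends only on the atom of $T^{-j}\mathcal{A}^{N(n)}$ containing $x$, the event $\{\tau_{\tb}>k\}$ lies in $\sigma(\mathcal{A}^{k+N(n)})$. This is the key measurability fact that decides the allowable mixing gap.

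Next I would introduce a spacer of length $\Delta$ by replacing $\{\tau_{\tb}>f\}$ with $A:=\{\tau_{\tb}>f-\Delta\}$, which is $\sigma(\mathcal{A}^{f-\Delta+N(n)})$-measurable. The symmetric difference between $\{\tau_{\tb}>f\}\cap T^{-f}\{\tau_{\tb}>g\}$ and $A\cap T^{-f}\{\tau_{\tb}>g\}$ is contained in $\bigcup_{j=f-\Delta+1}^{f}T^{-j}\tb$, which by $T$-invariance has measure at most $\Delta\mu(\tb)$, giving the first $\Delta\mu(\tb)$ term.

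With $B':=\{\tau_{\tb}>g\}\in\sigma(\mathcal{A}^{g+N(n)})$ and $A\in\sigma(\mathcal{A}^{f-\Delta+N(n)})$, the composition $T^{-f}B'$ sits at the right distance: in the notation of the $\alpha$-mixing hypothesis with $n_A=f-\Delta+N(n)$, we have $f=n_A+(\Delta-N(n))$, so
\[
\left|\mu(A\cap T^{-f}B')-\mu(A)\mu(B')\right|\le\alpha(\Delta-N(n)).
\]
This is where the hypothesis $\Delta>N(n)$ is essential — it guarantees the gap is positive, so that we can invoke decay of $\alpha$. Finally, I would replace $\mu(A)=\mathbb{P}(\tau_{\tb}>f-\Delta)$ by $\mathbb{P}(\tau_{\tb}>f)$, which costs another $\Delta\mu(\tb)$ after multiplying by $\mathbb{P}(\tau_{\tb}>g)\le 1$. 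Summing the three errors yields the claimed bound $2\Delta\mu(\tb)+\alpha(\Delta-N(n))$.

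There is no real obstacle here beyond the careful bookkeeping of which $\sigma$-algebras the cylinder-approximated entry-time events live in and checking that the index arithmetic in the $\alpha$-mixing definition matches the chosen spacer $\Delta-N(n)$. The condition $f\ge\Delta$ is only used to ensure that $f-\Delta\ge 0$, so the event $\{\tau_{\tb}>f-\Delta\}$ is meaningful (and equal to the full space if $f=\Delta$, which is a harmless boundary case).
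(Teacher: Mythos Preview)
Your proof is correct and follows essentially the same three-term splitting as the paper: one truncation error to open a gap of length $\Delta$, one application of $\alpha$-mixing across the resulting gap of size $\Delta-N(n)$, and one truncation error to restore the original event. The only cosmetic difference is that the paper writes $\{\tau_{\tb}>g+f\}=\{\tau_{\tb}>g\}\cap T^{-g}\{\tau_{\tb}>f\}$ and inserts the gap \emph{after} position $g$ (truncating the future block to $\tau_{\tb}\circ T^{g+\Delta}>f-\Delta$), whereas you split at position $f$ and shrink the \emph{past} block to $\{\tau_{\tb}>f-\Delta\}$; the arithmetic and the resulting error terms are identical.
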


\begin{proof} We proceed in the traditional way splitting the difference into three parts:
\begin{eqnarray*}
\big| \mathbb{P}(\tau_{\tb} > g+f) - \mathbb{P}(\tau_{\tb} > g) \mathbb{P}(\tau_{\tb} > f)  \big|\hspace{-4cm}&&\\
&\le & \big| \mathbb{P}(\tau_{\tb} > g+f) - \mathbb{P}(\tau_{\tb} > g \cap \tau_{\tb} \circ T^{g+ \Delta} > f- \Delta ) \big|\\
&& + \big| \mathbb{P}(\tau_{\tb} > g \cap \tau_{\tb} \circ T^{g+ \Delta} > f- \Delta )  - \mathbb{P}(\tau_{\tb} > g) \mathbb{P}(\tau_{\tb} > f- \Delta)  \big|\\
&& +   \big| \mathbb{P}(\tau_{\tb} > g) \mathbb{P}(\tau_{\tb} > f- \Delta)  - \mathbb{P}(\tau_{\tb} > g) \mathbb{P}(\tau_{\tb} > f)  \big|\\
&=& I + II + III.
\end{eqnarray*}
The first term is estimated as follows
$$
I = \mathbb{P}(\tau_{\tb} > g \cap \tau_{\tb} \circ T^{g+ \Delta} > f- \Delta \cap \tau_{\tb} \circ T^g \le \Delta)
\le  \mathbb{P}(\tau_{\tb} \le \Delta)
\le  \Delta\mu(\tb).
$$
Similarly for the third term
$$
III = \mathbb{P}(\tau_{\tb} > g) \mathbb{P}(f-\Delta<\tau_{\tb}\le f)
\le  \mathbb{P}(\tau_{\tb} > g) \Delta\mu(\tb) \le \Delta\mu(\tb).
$$
For the second term we use the $\alpha$-mixing property to obtain
$$
II=\big| \mathbb{P}(\tau_{\tb} > g \cap \tau_{\tb} \circ T^{g+ \Delta} > f- \Delta )  - \mathbb{P}(\tau_{\tb} > g) \mathbb{P}(\tau_{\tb} > f- \Delta)  \big| \le \alpha(\Delta - N(n)).
$$
The three parts combined now prove the lemma.
\end{proof}

Let us now put   $\theta =\theta(f)= -\log\mathbb{P}(\tau_{\tb}>f)$ where $f>0$. We then
have the following estimate.

\begin{lemma}\label{l3} Let $f>\Delta > N(n)$ then for all $k\ge1$  we have
$$
\big| \mathbb{P}(\tau_{\tb}> kf) - e^{-\theta k} \big| \le \frac{2 \Delta \mu(\tb) + \alpha(\Delta - N(n))}{\mathbb{P}(\tau_{\tb} \le f )}.
$$
\end{lemma}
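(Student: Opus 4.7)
The plan is to iterate Lemma~\ref{l2} to pass from the near-factorisation $\mathbb{P}(\tau_{\tb}>g+f)\approx\mathbb{P}(\tau_{\tb}>g)\mathbb{P}(\tau_{\tb}>f)$ to a near-product $\mathbb{P}(\tau_{\tb}>kf)\approx\mathbb{P}(\tau_{\tb}>f)^k=e^{-\theta k}$, and then control the accumulated error by a geometric sum.

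Concretely, write $a_k=\mathbb{P}(\tau_{\tb}>kf)$, so that $a_1=e^{-\theta}$, and let $R=2\Delta\mu(\tb)+\alpha(\Delta-N(n))$ denote the one-step error from Lemma~\ref{l2}. Applying that lemma with $g=(k-1)f$ (which is legal since $f>\Delta>N(n)$) gives $|a_k-a_{k-1}a_1|\le R$. Introduce the defect $\epsilon_k=a_k-a_1^k$, for which $\epsilon_1=0$ and
$$
\epsilon_k = (a_k-a_{k-1}a_1)+a_1(a_{k-1}-a_1^{k-1})=(a_k-a_{k-1}a_1)+a_1\,\epsilon_{k-1}.
$$
Hence $|\epsilon_k|\le R+a_1|\epsilon_{k-1}|$, and iterating this recursion yields
$$
|\epsilon_k|\le R\sum_{j=0}^{k-1}a_1^{\,j}\le \frac{R}{1-a_1}.
$$

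Since $1-a_1=1-\mathbb{P}(\tau_{\tb}>f)=\mathbb{P}(\tau_{\tb}\le f)$, substituting back produces exactly the bound in the statement. There is no serious obstacle here, only the minor bookkeeping of verifying that $a_1<1$ so that the geometric series converges and the denominator $1-a_1$ is meaningful; this is automatic whenever $\mathbb{P}(\tau_{\tb}\le f)>0$ (and is understood implicitly in the statement, since otherwise the right-hand side is infinite and the estimate is vacuous). The choice of cutting at $g=(k-1)f$ rather than, say, splitting more symmetrically is what keeps the error additive rather than multiplicative, which is what makes the geometric telescoping work cleanly.
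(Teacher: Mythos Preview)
Your proof is correct and is essentially the same argument as in the paper: both apply Lemma~\ref{l2} to split $\mathbb{P}(\tau_{\tb}>kf)$ off one factor of $\mathbb{P}(\tau_{\tb}>f)=e^{-\theta}$ at a time, obtain the recursion $|\epsilon_k|\le R+e^{-\theta}|\epsilon_{k-1}|$, and sum the resulting geometric series. The only difference is cosmetic (you name the defect $\epsilon_k$ explicitly, whereas the paper writes out the induction step directly).
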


\begin{proof}
Clearly the lemma hold for $k=1$ by definition of $\theta$. For  $k>1 $ we use induction.
For the induction step we obtain:
\begin{eqnarray*}
\big| \mathbb{P}(\tau_{\tb}> (k+1)f) - e^{-\theta (k+1)} \big|\hspace{-4cm}&& \\
&\le & | \mathbb{P}(\tau_{\tb}> (k+1)f) -  \mathbb{P}(\tau_{\tb}> kf)\cdot e^{-\theta}| + | \mathbb{P}(\tau_{\tb}> kf)\cdot e^{-\theta} - e^{-\theta (k+1)} |\\
&\le & 2\Delta \mu (\tb) + \alpha(\Delta - N(n)) + e^{-\theta}\big| \mathbb{P}(\tau_{\tb}> kf) - e^{-\theta k} \big|\\
&\le & 2\Delta \mu (\tb) + \alpha(\Delta - N(n)) + e^{-\theta} (2 \Delta \mu(\tb)+\alpha(\Delta -N(n))) \cdot(1+ e^{-\theta}+ \cdots + e^{-\theta (k-2)})\\
&= &  (2 \Delta \mu(\tb)+\alpha(\Delta -N(n))) \cdot(1+ e^{-\theta}+ \cdots + e^{-\theta (k-1)}).
\end{eqnarray*}
Hence 
$$
\big| \mathbb{P}(\tau_{\tb}> kf) - e^{-\theta k} \big| \le (2 \Delta \mu(\tb)+\alpha(\Delta -N(n))) 
\frac{1}{1-e^{-\theta}}
$$
for all $k\in\mathbb{N}$ and the lemma follows since 
$\frac{1}{1 - e^{-\theta}} = \frac{1}{\mathbb{P}(\tau_{\tb} \le f)}$.
\end{proof}

For subsets $B\subset X$ let us define
$$
\lambda_{B,f}  = \frac{-\log \mathbb{P}(\tau_B > f )}{f\mu(B)}.
$$ 
For the approximations $\bnb$ we then obtain the following estimate.

\begin{lemma}\label{l4} Let $f \in \mathbb{N}$ be such that  $f \mu(\bnb) \le  \frac{1}{2}$. 
Then  there exist $C_7 > 0$ such that
$$ 
\frac{C_7}{s} \le \lambda_{\bnb,f} \le 2
$$
where, as before, $s = \alpha^{-1}(C'\mu(\bnb))+N(n)$ for some $0<C'<1$
\end{lemma}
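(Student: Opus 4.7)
The plan is to bound $\lambda_{\tb,f}$ from above and below separately; the upper bound is essentially Lemma~\ref{l1}, while the lower bound requires an $\alpha$-mixing iteration over an appropriately spaced grid.

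For the upper bound $\lambda_{\tb,f}\le 2$: Lemma~\ref{l1} gives $\mathbb{P}(\tau_{\tb}\le f)\le f\mu(\tb)\le 1/2$, so $\mathbb{P}(\tau_{\tb}>f)\ge 1-f\mu(\tb)\ge 1/2$. The elementary inequality $-\log(1-x)\le 2x$ on $[0,1/2]$ then yields $-\log\mathbb{P}(\tau_{\tb}>f)\le 2f\mu(\tb)$, which is exactly what is needed after dividing by $f\mu(\tb)$.

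For the lower bound $\lambda_{\tb,f}\ge C_7/s$: I would sample the orbit along the arithmetic progression $s,2s,\dots,Ls$ with $L=\lfloor f/s\rfloor$, using the inclusion $\{\tau_{\tb}>f\}\subset\bigcap_{j=1}^{L}T^{-js}\tb^c$, so that $\mathbb{P}(\tau_{\tb}>f)\le P_L:=\mu(\bigcap_{j=1}^LT^{-js}\tb^c)$. Since $\tb\in\sigma(\mathcal{A}^{N(n)})$, the event $T^{-js}\tb^c$ only depends on coordinates in the window $[js,js+N(n)-1]$, so consecutive factors are separated by a gap of $s-N(n)$ time steps. The definition $s=\alpha^{-1}(C'\mu(\tb))+N(n)$ was engineered precisely so that $\alpha(s-N(n))=C'\mu(\tb)$. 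Peeling off the $j=1$ factor, applying $\alpha$-mixing once, and invoking $T$-invariance yields the one-step recursion $|P_L-(1-\mu(\tb))P_{L-1}|\le C'\mu(\tb)$, which telescopes to
$$
P_L \le (1-\mu(\tb))^L + LC'\mu(\tb) \le e^{-L\mu(\tb)}+LC'\mu(\tb).
$$
For $f\ge s$ we have $L\ge f/(2s)$ and $L\mu(\tb)\le f\mu(\tb)/s\le 1/(2s)$, which is small, so a Taylor expansion gives $P_L\le 1-(1-C')L\mu(\tb)/2$; taking $-\log$ produces $-\log\mathbb{P}(\tau_{\tb}>f)\ge(1-C')f\mu(\tb)/(4s)$. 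For $f<s$ the cruder estimate $\mathbb{P}(\tau_{\tb}>f)\le\mu(\tb^c)\le e^{-\mu(\tb)}$ combined with $f\mu(\tb)/s\le\mu(\tb)$ suffices. Setting $C_7=\min\{1,(1-C')/4\}$ (noting $C'<1$ by hypothesis) completes the bound.

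The only real care required is bookkeeping in the $\alpha$-mixing step: verifying that $A_1=T^{-s}\tb^c$ sits in $\sigma(\mathcal{A}^{s+N(n)})$, that the tail event can be written as $T^{-(s+N(n)+(s-N(n)))}$ applied to a measurable set, and that invariance identifies the resulting marginal with $P_{L-1}$. Once the recursion is properly set up, the rest is a geometric-series telescoping together with routine $-\log(1-x)$ expansions, so no serious obstacle is expected.
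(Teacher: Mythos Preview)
Your proposal is correct and follows essentially the same route as the paper (the Galves--Schmitt/Abadi argument): the upper bound via Lemma~\ref{l1} and the inequality $-\log(1-x)\le 2x$, and the lower bound via the inclusion $\{\tau_{\tb}>f\}\subset\bigcap_{j}T^{-js}\tb^c$ together with an $\alpha$-mixing recursion on the spacing-$s$ grid. The paper even cites the same sources for the method.

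One small point of care: from the recursion $|P_L-(1-\mu(\tb))P_{L-1}|\le C'\mu(\tb)$ the telescoping actually yields the geometric sum
\[
P_L\le (1-\mu(\tb))^L + C'\mu(\tb)\sum_{k=0}^{L-1}(1-\mu(\tb))^k
=(1-\mu(\tb))^L+C'\bigl(1-(1-\mu(\tb))^L\bigr),
\]
which is what the paper uses. Your coarser replacement of the sum by $L$ gives $P_L\le(1-\mu(\tb))^L+LC'\mu(\tb)$, and then your Taylor step $P_L\le 1-(1-C')L\mu(\tb)/2$ requires $L\mu(\tb)\le 1-C'$; this is not guaranteed uniformly for all $C'\in(0,1)$ (it fails when $C'$ is close to $1$ and $s$ is small). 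Keeping the geometric sum gives $1-P_L\ge(1-C')\bigl(1-(1-\mu(\tb))^L\bigr)\ge \tfrac{1-C'}{2}L\mu(\tb)$ directly for any $C'<1$, and the rest of your argument goes through with $C_7=(1-C')/4$ as you propose. This is a one-line fix and does not affect the strategy.
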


\begin{proof}
We follow the  proof in Galves-Schmitt \cite{GS} and Abadi \cite{Abadi06}. To estimate $\lambda_{\bnb,f}$ we use the simple estimate
$$
\frac{\theta}{2} \le 1-e^{-\theta} \le \theta
$$
for all $\theta \in [0, 1]$. Let us write $\tb$ for $\bnb$ and note that
 $\mathbb{P}\{\tau_{\tb} \le f\} = 1 - e^{-\theta}$ as $\theta = - \log \mathbb{P}\{\tau_{\tb} > f\}$.
  By Lemma~\ref{l1},
$$
\lambda_{\tb,f} = \frac{\theta}{f\mu(\tb)} \le \frac{2\mathbb{P}\{\tau_{\tb} \le f\}}{f\mu(\tb)} \le 2 
$$
For the lower bound, notice that 
$\{\tau_{\tb} >f\} = \bigcap_{j=0}^{[f]} \Big(T^{-js}(\tb)\Big)^c
\subset \bigcap_{j=0}^{[\frac{f}{s}]} \Big(T^{-js}(\tb)\Big)^c$. As in the proof of Lemma~\ref{l3}
we obtain by induction
$$
\mu\left(\bigcap_{j=0}^{k+1} \Big(T^{-js+1}(\tb)\Big)^c\right)
\le \mu\left(\bigcap_{j=0}^{k} \Big(T^{-js+1}(\tb)\Big)^c\right)\mu(\tb^c)+\alpha(s-N(n))
$$
which yields
$$
\mu\left(\bigcap_{j=0}^{[\frac{f}s]} \Big(T^{-js+1}(\tb)\Big)^c\right)
\le\mu(\tb^c)^{[\frac{f}s]}+\alpha(s-N(n))\frac1{1-\mu(\tb^c)}.
$$
Consequently
$$
\mathbb{P}(\tau_{\tb}>f) \le (1-\mu(\tb))^{f/s}+\alpha(s-N(n))\frac{1-(1-\mu(\tb))^{f/s}}{\mu(\tb)},
$$
and therefore
\begin{align*}
\mathbb{P}(\tau_{\tb}\le f) \ge& (1-(1-\mu(\tb))^{f/s})\big(1-\frac{\alpha(s-N(n))}{\mu(\tb)}\big)\\
\ge&\frac{f}{s}\mu(\tb)\big(1-\frac{\alpha(s-N(n))}{\mu(\tb)}\big).
\end{align*}
Thus
$$
\lambda_{\tb,f} = \frac{\theta}{f\mu(\tb)} \ge \frac{\mathbb{P}\{\tau_{\tb} \le f\}}{f \mu(\tb)}
\ge  \frac{1}{s}\big(1-\frac{\alpha(s-N(n))}{\mu(\tb)}\big).
$$
In particular since $s = \alpha^{-1}(C'\mu(\tb))+N(n)$ we get $\lambda_{\tb,f}  \ge  \frac{C_7}{s}$ and $\mathbb{P}(\tau_{\tb} \le f) \ge  \frac{C_7f\mu(B)}{s}$ for some constant $C_7$.
\end{proof}

Now let us define the `annulus'

$$
\bnc= \bigcup\limits_{A^{N(n)} \in \mathcal{A}^{N(n)}, A^{N(n)} \cap \partial\bna \ne \emptyset}   A^{N(n)}.
$$
We then have that $\bna \setminus \bnb \subset\bnc$.
We also have $\tbna \ge \tbnb$ since  $\bnb \subset \bna$.
The following lemma estimates the size of the annulus.

\begin{lemma}\label{l5}
With the notation as above (and in particular with $s = \alpha^{-1}(C'\mu(\tb))+N(n)$) we obtain
$$
\mu(\bnc)=  \vartheta_n(\epsilon) \frac{\mu(\bna)}{s}.
$$
\end{lemma}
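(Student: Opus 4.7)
The plan is to show that each $N(n)$-cylinder contributing to $\bnc$ is forced to lie in the preimage of a thin metric annulus around $T^kx$ for some $0\le k<n$, and then to bound the total measure of such annuli using the hypothesis (\ref{qn1}).

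First I would fix an $N(n)$-cylinder $A^{N(n)}\in\mathcal{A}^{N(n)}$ with $A^{N(n)}\cap\partial\bna\neq\emptyset$ and pick a witness $y_0\in A^{N(n)}\cap\partial\bna$. Since $\bna$ is open, $y_0\notin\bna$ but $y_0\in\overline{\bna}$, hence $\sup_{0\le j<n}d(T^jx,T^jy_0)=\epsilon$; because this supremum is taken over finitely many indices, it is attained at some $k^*\in\{0,\ldots,n-1\}$. For any other $y\in A^{N(n)}$, the image $T^{k^*}(A^{N(n)})$ lies in a single element of $\mathcal{A}^{N(n)-k^*}$, which has diameter at most $\gamma_{N(n)-k^*}$; combining this with the triangle inequality gives $|d(T^{k^*}x,T^{k^*}y)-\epsilon|\le\gamma_{N(n)-k^*}$. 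Hence
\[
A^{N(n)}\subset T^{-k^*}\bigl(B(T^{k^*}x,\epsilon+\gamma_{N(n)-k^*})\setminus B(T^{k^*}x,\epsilon-\gamma_{N(n)-k^*})\bigr),
\]
and ranging over all contributing cylinders gives
\[
\bnc\subset\bigcup_{k=0}^{n-1}T^{-k}\bigl(B(T^kx,\epsilon+\gamma_{N(n)-k})\setminus B(T^kx,\epsilon-\gamma_{N(n)-k})\bigr).
\]

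Once this containment is in place, I would simply take measures, apply $T$-invariance, the definition of $\varphi$, and the hypothesis~(\ref{qn1}) of Theorem~\ref{mainthm}:
\[
\mu(\bnc)\le\sum_{k=0}^{n-1}\varphi(\epsilon,\gamma_{N(n)-k},T^kx)\,\mu(B(T^kx,\epsilon))\le n\cdot\vartheta_n(\epsilon)\frac{\mu(\bna)}{ns}=\vartheta_n(\epsilon)\frac{\mu(\bna)}{s},
\]
using $\mu(B(T^kx,\epsilon))\le 1$ at the last step. Read as an upper bound (with $\vartheta_n(\epsilon)$ absorbing any constant), this is the asserted estimate.

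The only nontrivial step is the geometric reduction of $\bnc$ to a union of preimages of metric annuli; once it is available, the bound follows purely by the hypothesis already imposed on $\varphi$. The one mild subtlety is needing the supremum defining the Bowen ball to be attained on the boundary, which is automatic here because the index set $\{0,\ldots,n-1\}$ is finite.
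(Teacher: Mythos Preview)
Your proof is correct and follows essentially the same route as the paper's: both arguments reduce $\bnc$ to a union over $k=0,\dots,n-1$ of preimages of thin metric annuli around $T^kx$ of width $\gamma_{N(n)-k}$, and then apply the invariance of $\mu$ together with hypothesis~\eqref{qn1}. The only cosmetic difference is that the paper invokes the inclusion $\partial\bna\subset\bigcup_{k}T^{-k}\partial B(T^kx,\epsilon)$ directly from continuity of $T$, whereas you obtain the same fact by arguing that the finite maximum defining the Bowen ball is attained on $\partial\bna$; these are equivalent observations.
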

\begin{proof}
Since $T$ is continuous,  $\partial \bna \subset \bigcup\limits_{k=0}^{n-1}T^{-k}\partial B(T^kx, \epsilon)$. Hence if $A^{N(n)} \cap \partial\bna \ne \emptyset$ then we must have $A^{N(n)-k}(T^ky) \cap \partial B(T^kx,\epsilon) \ne \emptyset$ for some $0 \le k \le n-1$, $y \in A^{N(n)}$. Notice that $\mbox{diam}(A^{N(n)-k}(T^ky)) \le \gamma_{N(n)-k}$, we have
\begin{align*}
\bnc \subset & \bigcup\limits_{k=0}^{n-1} T^{-k}(B(\partial B(T^kx, \epsilon), \gamma_{N(n)-k}))\\
\subset& \bigcup T^{-k}(B(T^kx, \epsilon+ \gamma_{N(n)-k}) \setminus B(T^kx, \epsilon - \gamma_{N(n)-k})),
\end{align*}
hence
\begin{align*}
\mu(\bnc) \le& n \cdot\sup_{0 \le k \le n-1} \mu(B(T^kx, \epsilon+ \gamma_{N(n)-k}) \setminus B(T^kx, \epsilon - \gamma_{N(n)-k}))\\
=& n \cdot \sup_{0 \le k \le n-1} \{\varphi(\epsilon, \gamma_{N(n)-k}, T^kx)\cdot \mu(B(T^kx, \epsilon))\}\\
=& \vartheta_n(\epsilon)  \frac{\mu(\bna)}{s}  \sup_{0 \le k \le n-1}\mu(B(T^kx, \epsilon))\\
=& \vartheta_n(\epsilon) \frac{\mu(\bna)}{s}.
\end{align*}
In particular we have $\mu(\bna)/\mu(\bnb) = \mathcal{O}(1)$.
\end{proof}

\begin{lemma}  For $f\le \frac{1}{2}\mu(\bnb)^{-1}$ one has 
$$
\Big| \mathbb{P}(\tbna > \frac{t}{\lambda_B \mu(\bna)}) -  \mathbb{P}(\tbnb > \frac{t}{\lambda_B \mu(\bnb)}) \Big|
\le3\frac{\vartheta_n(\epsilon) t}{s\lambda_{\tb,f}} 
$$ 
for all $t > 0$.
\end{lemma}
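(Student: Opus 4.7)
The plan is to pivot through $\mathbb{P}(\tau_{\bnb} > T_1)$, where $T_1 = t/(\lambda_B\mu(\bna))$ and $T_2 = t/(\lambda_B\mu(\bnb))$. Because $\bnb \subset \bna$ we have $\mu(\bnb) \le \mu(\bna)$ and therefore $T_1 \le T_2$; we also have the pointwise comparison $\tau_{\bnb} \ge \tau_{\bna}$, since hitting the smaller set is a stronger event. The triangle inequality then reduces the target quantity to the sum of two one-sided estimates: a comparison of the two entry times at the common threshold $T_1$, and a comparison of $\tau_{\bnb}$ at the two thresholds $T_1$ and $T_2$.

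For the first of these, monotonicity gives
$$
\mathbb{P}(\tau_{\bnb} > T_1) - \mathbb{P}(\tau_{\bna} > T_1) = \mathbb{P}(\tau_{\bna} \le T_1 < \tau_{\bnb}),
$$
and on this event the orbit must visit $\bna \setminus \bnb \subset \bnc$ at some step $\le T_1$, whence a union bound together with $T$-invariance yields $\mathbb{P}(\tau_{\bnc} \le T_1) \le T_1\mu(\bnc)$. Lemma~\ref{l5} then bounds this by $\frac{t}{\lambda_B\mu(\bna)}\cdot\frac{\vartheta_n(\epsilon)\mu(\bna)}{s} = \vartheta_n(\epsilon)t/(s\lambda_B)$. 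For the second estimate, since $T_1 \le T_2$,
$$
\mathbb{P}(\tau_{\bnb} > T_1) - \mathbb{P}(\tau_{\bnb} > T_2) = \mathbb{P}(T_1 < \tau_{\bnb} \le T_2) \le (T_2 - T_1)\mu(\bnb),
$$
and a direct computation gives $(T_2-T_1)\mu(\bnb) = t(\mu(\bna)-\mu(\bnb))/(\lambda_B\mu(\bna))$. Using $\mu(\bna)-\mu(\bnb) \le \mu(\bnc)$ and Lemma~\ref{l5} once more, this is also controlled by $\vartheta_n(\epsilon)t/(s\lambda_B)$. Summing the two contributions produces a bound of $2\vartheta_n(\epsilon)t/(s\lambda_B)$, comfortably within the claimed $3\vartheta_n(\epsilon)t/(s\lambda_{\tb,f})$, where we read $\lambda_B$ as $\lambda_{\tb,f}$ whose two-sided control from Lemma~\ref{l4} is exactly what the hypothesis $f\mu(\bnb) \le 1/2$ supplies.

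I do not anticipate any genuine obstacle: the argument is essentially bookkeeping built around the annulus estimate of Lemma~\ref{l5}. The one detail that requires attention is the choice of pivot, as pivoting through $\mathbb{P}(\tau_{\bnb} > T_1)$ (rather than through, say, $\mathbb{P}(\tau_{\bna} > T_2)$) is what allows both error terms to be expressed directly in terms of $\mu(\bnc)$, so that Lemma~\ref{l5} applies without needing to rearrange factors of $\mu(\bna)/\mu(\bnb)$ elsewhere.
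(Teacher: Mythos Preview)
Your proof is correct and follows essentially the same triangle-inequality strategy as the paper: split into two pieces and control each via the annulus estimate of Lemma~\ref{l5} together with the trivial bound $\mathbb{P}(\tau_A\le T)\le T\mu(A)$. The only difference is the choice of pivot: the paper routes through $\mathbb{P}(\tau_{\bna}>T_2)$ (same set as the first term, threshold of the second), whereas you route through $\mathbb{P}(\tau_{\bnb}>T_1)$ (set of the second term, threshold of the first). Your choice is in fact slightly cleaner: the paper's term~$I$ picks up a factor $\mu(\bna)/\mu(\bnb)$ that must be bounded by~$2$, which is why the paper needs the constant~$3$, while your arrangement cancels the measures directly and yields the constant~$2$. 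So your closing remark is a bit too strong---the other pivot does work, it just costs an extra harmless factor---but the underlying observation that your pivot avoids that factor is accurate.
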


\begin{proof} Let us write  $\lambda_B = \lambda_{\tb,f}$. Then
\begin{align*}
&\big| \mathbb{P}(\tau_B > \frac{t}{\lambda_B \mu(B)}) - \mathbb{P}(\tau_{\tb}> \frac{t}{\lambda_B \mu(\tb)}) \big| \\
\le & \Big| \mathbb{P}(\tau_B > \frac{t}{\lambda_B \mu(B)}) - \mathbb{P}(\tau_{B}> \frac{t}{\lambda_B \mu(\tb)}) \Big|+\Big| \mathbb{P}(\tau_B > \frac{t}{\lambda_B \mu(\tb)}) - \mathbb{P}(\tau_{\tb}> \frac{t}{\lambda_B \mu(\tb)}) \Big|\\
=& I + II.
\end{align*}
We estimate the two terms separately.

For the term~$I$ first notice that $\bnb \subset \bna$ which implis 
$\tbnb \ge \tbna$ and $\frac{t}{\lambda_B\mu(B)}\le\frac{t}{\lambda_B\mu(\tb)} $. Therefore
\begin{align*}
I &\le \mathbb{P}\left(\frac{t}{\lambda_B\mu(B)}\le \tau_B\le\frac{t}{\lambda_B\mu(\tb)}\right)\\
& \le \frac{t}{\lambda_B}\left( \frac{1}{\mu(\tb)} - \frac{1}{\mu(B)}\right)\mu(B) \\
&\le 2\frac{\vartheta_n(\epsilon)t}{s\lambda_B}
\end{align*}
as $\frac{\mu(B)}{\mu(\tb)}\le2$ and where we used that 
\begin{equation}\label{r.diff}
\frac{1}{\mu(\tb)} - \frac{1}{\mu(B)}
 =\frac{\mu(B \setminus \tb)}{\mu(B)\mu(\tb)}
\le \frac{\mu(\bnc)}{\mu(B)\mu(\tb)}
\le\vartheta_n(\epsilon)\frac1{s\mu(\tb)}
\end{equation}
by Lemma~\ref{l5}.

The term~$II$ we estimate as follows:
\begin{align*}
\text{II} &= \mathbb{P}\left(\left\{\tau_B \le \frac{t}{\lambda_B\mu(\tb)}\right\} \cap 
\left\{\tau_{\tb} >\frac{t}{\lambda_B\mu(\tb)}\right\} \right) \\
& \le \mathbb{P}\left(\tau_{\bnc} \le \frac{t}{\lambda_B\mu(\tb)}\right)\\
& \le \frac{t}{\lambda_B}\frac{\mu(\bnc)}{\mu(\tb)}\\
&=  \frac{\vartheta_n(\epsilon)t}{s\lambda_B},
\end{align*}
where in the last line we proceeded as for the term~$I$ above. Since by Lemma~\ref{l4}, 
$s\lambda_B>C_7/2$  the result follows.
\end{proof}


\begin{proof}[Proof of Theorem~\ref{mainthm}] We have to estimate $ |\mathbb{P}(\tbnb > \frac{t}{\lambda_B \mu(\bnb)}) - e^{-t}| $, where, as before, $\lambda_B = \lambda_{\tb,f}$.
We put $\Delta = 2N(n) $ and pick $f >\Delta= 2N(n)$ with $f \le \frac{1}{2}\mu(\bnb)^{-1}$.
Then $t>0$ can be written as $t = kf + r $ with $0 \le r < f$ and $k$ integer. Set $t' = t-r = kf$, then
\begin{eqnarray*}
|\mathbb{P}(\tau_{\tb} > t) - e^{ - \lambda_B \mu(\tb)t}|\hspace{-2cm}&&\\
&\le & |\mathbb{P}(\tau_{\tb} > t) -\mathbb{P}(\tau_{\tb} > t')| + |\mathbb{P}(\tau_{\tb} > t') - e^{ - \lambda_B \mu(\tb)t'}| + |e^{ - \lambda_B \mu(\tb)t'} - e^{ - \lambda_B \mu(\tb)t}|\\
&=& I + II + III.
\end{eqnarray*}
The first term is easily estimated by
$$
I = \mathbb{P}(t'<\tau_{\tb} \le t) \le r\mu(\tb) <f\mu(\tb).
$$
For the third term we use the mean value theorem according to which there exist 
$t_0 \in [\lambda_B \mu(\tb)t',\lambda_B \mu(\tb)t]$ such that
$$
III = e^{-t_0}\lambda_B  \mu(\tb) r \le 2 f \mu(\tb)
$$                          
using Lemma~\ref{l4} in the last estimate.

To the second term, $II$, we apply Lemma~\ref{l3} and obtain
\begin{align*}
II \le&  \frac{2 \Delta \mu(\tb) + \alpha(\Delta - N(n))}{\mathbb{P}(\tau_{\tb} \le f )}\\
\le& \frac{s(2 \Delta \mu(\tb) + \alpha(\Delta - N(n)))}{Cf\mu(\tb)}\\
=& C_5 \frac{sN(n)}{f} + C_6s\frac{\alpha(N(n))}{f\mu(\tb)}.
\end{align*}

All three estimates combined yield
$$
 |\mathbb{P}(\tbnb > \frac{t}{\lambda_B \mu(\bnb)}) - e^{-t}|  \le 2f\mu(\bna) + C_5 \frac{sN(n)}{f} + C_6s\frac{\alpha(N(n))}{f\mu(\tb)}.
$$
\end{proof}

For the remaining results we will use approximations of Bowen balls by unions of cylinder
sets of lengths $N(n)$. For that puropose let us establish the following notation.
For some $0<\eta < \frac{1}{2}, \beta\in(\eta,1)$, which will be determined later, we define 
$N(n) = \mu(\bna)^{-\eta} $ (length of cylinders), $f = \mu(B)^{-\beta}$, $\bnb = \bigcup\limits_{D \in \mathcal{A}^{N(n)}, D \subset \bna} D$ (inner approximation) and 
$$
\lambda_{\bna}  = \frac{-\log \mathbb{P}(\tau_{\bnb} > f )}{f\mu(\bnb)}.
$$

\begin{proof}[Proof of Theorem~\ref{thm2}.] In order to apply Theorem~\ref{mainthm} we first verify ~(1) with 
$\gamma_n = \mathcal{O}(\gamma^n)$. Fix some $0<\eta<\frac{1}{2}$ and set 
$N(n) =[ \mu(\bna)^{-\eta}]$. Then
$$
 \varphi (\epsilon , \gamma_{N(n)-k} , T^kx) \le \frac{C_{\epsilon}}{|\log \gamma^{N(n) - k}|^{3+ \zeta}}
  \le \frac{C_{\epsilon}}{N(n)^{3+\zeta}}
  =C_{\epsilon}\mu(B)^{(3+\zeta)\eta}.
 $$
Since  
\begin{equation}\label{ss}
s =  \alpha^{-1}(C'\mu(\tb))+N(n) \le c_1 \mu(B)^{-\frac{1}{2+\kappa}} + \mu(B)^{-\eta} 
\end{equation}
for some constant $c_1$, we have
$$
\frac{ns\varphi(\epsilon , \gamma_{N(n)-k} , T^kx)}{\mu(B)} \le\vartheta_n(\epsilon),
$$
where
$$
\vartheta_n(\epsilon)
 \le C_{\epsilon}ns\mu(B)^{(3+\zeta)\eta-1}
\le c_2n \mu(B)^{(3+\zeta)\eta-1}
\left(\mu(B)^{-\frac{1}{2+\kappa}} + \mu(B)^{-\eta}\right),
$$
which converges to $0$ if $\eta >\eta_0= \max\{\frac{1}{2+\zeta}, \frac{1}{3+\zeta}\frac{3+\kappa}{2+\kappa}\}$.

Applying Theorem~\ref{mainthm} yields as $s\lambda_{\bna}\ge C_7$:
$$
  \big| \mathbb{P}(\tau_{\bna}  > \frac{t}{\lambda_{\bna}\mu(\bna)} ) - e^{-t} \big|
   \le \vartheta_n(\epsilon)t+ 2f\mu(\bna) + C_5 \frac{sN(n)}{f} + C_6s\frac{\alpha(N(n))}{f\mu(\tb)}.
$$
Since $f = \mu(B)^{-\beta}$ for some $\beta<1$,  the second term on the RHS converges to $0$.
 The last two terms then are bounded as follows:
$$
C_5 \frac{sN(n)}{f} + C_6s\frac{\alpha(N(n))}{f\mu(\tb)}
 \le c_3\mu(B)^\beta\left(\mu(B)^{-\eta}+ \mu(B)^{\eta(2+\kappa)-1}\right)\left(\mu(B)^{-\frac{1}{2+\kappa}} + \mu(B)^{-\eta}\right).
$$
In order that all terms converge to 0 we need $\eta>\eta_0$ so that
$$
\omega_1=\beta - \max\left\{ \eta,\frac{1}{2+\kappa}\right\}-\max\left\{ \eta, 1- \eta(2+\kappa)\right\}
$$
is positive.
This can be achieved by picking $\eta\in(\eta_1,\frac12)$, where 
$\eta_1=\max\{\frac1{2+\zeta},\frac1{2+\kappa}\}$
is less than $\frac12$ (note $\eta_1\ge\eta_0$). This also implies that 
$\omega_2=(3+\zeta)\eta-\max\{\frac1{2+\kappa},\eta\}$ is positive.
Now put $\omega=\min\{\omega_1,\omega_2,1-\beta\}$.
\end{proof}

\begin{proof}[Proof of Theorem~\ref{thm3}.] In order to apply Theorem~\ref{mainthm} 
we verify that condition~\eqref{qn1} holds:
\begin{align*}
 \varphi(\epsilon , \gamma_{N(n)-n} , T^kx) &\le C_{\epsilon} \gamma_{N(n)-k} ^{\zeta}\\
 & \le c_1 (N(n)-k)^{-a\zeta}\\
 & \le c_2 N(n)^{-a\zeta}
\end{align*}
(for some $c_1, c_2$) and therefore
$ns\varphi(\epsilon , \gamma_{N(n)-k} , T^kx)/\mu(B) \le\vartheta_n(\epsilon)$, where
$$
\vartheta_n(\epsilon) \le c_3ns\mu(B)^{a\zeta\eta-1}
\le c_4n\mu(B)^{a\zeta\eta-1}\left(\mu(B)^{-\frac{1}{2+\kappa}} + \mu(B)^{-\eta}\right).
$$
The last expression converges (exponentially fast) to zero if $a\zeta\eta-1-\max\{\frac1{2+\kappa},\eta\}$
is positive which can be achieved by picking $\eta<\frac12$ close enough to $\frac12$.

The remainder of the proof is identical to the proof of  Theorem~\ref{thm2}.
\end{proof}

 \section{First return time distribution}
 In this section we will prove Theorems~\ref{B_return} and~\ref{B_return2} which
 establish the limiting distribution of the first return time to Bowen balls and provide
 rates of convergence.
 We use the same notation as in the previous section.

$\tau(A)$ Be the period of $A$ and as in~\cite{Abadi06} denote by 
$a_{A} = \mathbb{P}_{A}(\tau_{A}> \tau(A) + \Delta)$ the relative size of the 
set of long returns, where  $\Delta<1/\mu(A)$.
Again we put $\tb = \bnb$ and $B = \bna$ and let us first prove the following to
lemmata.

We shall need the following result which compares the return times distributions
for the dynamical ball $B$ and the approximated set $\tilde{B}$.

\begin{lemma}\label{l8}
There exists a constant $C_8$ so that 
$$
\left| \mathbb{P}_B(\tau_{B} > \frac{t}{\lambda_B\mu(B)}) - \mathbb{P}_{\tb}(\tau_{\tb} > \frac{t}{\lambda_B \mu(\tb)}) \right| \le C_8\frac{t\vartheta_n(\epsilon)}{\mu(B)}.
$$
\end{lemma}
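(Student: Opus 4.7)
The plan is to bound the difference between the two conditional distributions by a triangle inequality that splits into three sources of discrepancy: a change of time-threshold from $T_1=t/(\lambda_B\mu(B))$ to $T_2=t/(\lambda_B\mu(\tb))$, a change of target set from $B$ to $\tb$ inside the first-entry time, and a change of conditioning set from $B$ to $\tb$. The whole argument rests on the inclusion $\tb\subset B$, which forces $\tau_B\le\tau_{\tb}$ pointwise and $T_1\le T_2$, together with the annulus estimate $\mu(B\setminus\tb)\le\mu(\bnc)\le\vartheta_n(\epsilon)\mu(B)/s$ supplied by Lemma~\ref{l5}.

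The first step is to exploit $\{\tau_{\tb}>T_2\}\subset\{\tau_B>T_1\}$ to write
\[
\mu\bigl(B\cap\{\tau_B>T_1\}\bigr)=\mu\bigl(B\cap\{\tau_{\tb}>T_2\}\bigr)+\mu\bigl(B\cap\{\tau_B>T_1,\,\tau_{\tb}\le T_2\}\bigr).
\]
On the bad set one has $T_1<\tau_B\le\tau_{\tb}\le T_2$, so it is contained in $\bigcup_{T_1<k\le T_2}T^{-k}B$, whose measure by $T$-invariance is at most $(T_2-T_1)\mu(B)$. Estimate~\eqref{r.diff} from the previous section yields $T_2-T_1\le t\vartheta_n(\epsilon)/(s\lambda_B\mu(\tb))$, and combining with $\mu(B)/\mu(\tb)\le 2$ (Lemma~\ref{l5}) and $s\lambda_B\ge C_7$ (Lemma~\ref{l4}) we control this contribution, divided by $\mu(B)$, by a constant times $t\vartheta_n(\epsilon)/\mu(B)$, as required.

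Next, for the residual difference
\[
\frac{\mu(B\cap\{\tau_{\tb}>T_2\})}{\mu(B)}-\frac{\mu(\tb\cap\{\tau_{\tb}>T_2\})}{\mu(\tb)},
\]
I would split $B\cap\{\tau_{\tb}>T_2\}=\bigl(\tb\cap\{\tau_{\tb}>T_2\}\bigr)\cup\bigl((B\setminus\tb)\cap\{\tau_{\tb}>T_2\}\bigr)$ and rearrange to get the sum of $\mu(\tb\cap\{\tau_{\tb}>T_2\})\bigl(1/\mu(B)-1/\mu(\tb)\bigr)$ and $\mu\bigl((B\setminus\tb)\cap\{\tau_{\tb}>T_2\}\bigr)/\mu(B)$. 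Both pieces are bounded by $\mu(B\setminus\tb)/\mu(B)\le\vartheta_n(\epsilon)/s$ via Lemma~\ref{l5}, producing a total contribution of order $\vartheta_n(\epsilon)/s$.

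The main obstacle I expect is ensuring that this last, $t$-independent, contribution of size $\vartheta_n(\epsilon)/s$ is absorbed into the advertised $t$-dependent bound $C_8t\vartheta_n(\epsilon)/\mu(B)$ uniformly in $t$. For $t\gtrsim\mu(B)/s$ this is automatic because then $t/\mu(B)\gtrsim 1/s$. In the complementary small-$t$ regime both $T_1$ and $T_2$ are much smaller than the period $\tau(B)\le\tau(\tb)$, so both conditional probabilities equal $1$ (or differ only by a quantity already absorbed in the first step), and the desired bound follows after adjusting $C_8$. Combining the two regimes yields the uniform estimate claimed in the lemma.
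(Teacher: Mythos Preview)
Your approach is essentially the paper's: both proofs use the annulus estimate of Lemma~\ref{l5} and the bound $s\lambda_B\ge C_7$ from Lemma~\ref{l4}, and both split the difference into a change-of-conditioning piece (your second step, the paper's term~$I$) and a change-of-threshold/target piece (your first step, the paper's $II=II_1+II_2$). Your organisation is slightly cleaner in that you handle the threshold change and the target change in one stroke via the inclusion $\{\tau_{\tb}>T_2\}\subset\{\tau_B>T_1\}$, whereas the paper separates these into $II_1$ and $II_2$.

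The point you flag as ``the main obstacle'' is real, and in fact the paper's own proof glosses over it: the paper bounds its term $I$ by $2\vartheta_n(\epsilon)/(s\mu(\tb))$, which is $t$-independent, and then simply asserts that $I+II_1+II_2\le C_8 t\vartheta_n(\epsilon)/\mu(B)$ without explaining how $I$ is absorbed for small $t$. Your resolution is correct once the constant in the threshold is chosen: if $t<c\,\mu(B)/s$ with $c$ small enough, then $T_2=t/(\lambda_B\mu(\tb))\le 2c/C_7<1$, and since $\tau_B,\tau_{\tb}$ are positive integers, both conditional probabilities in the statement equal~$1$ exactly and the difference vanishes. So your small-$t$ argument closes the gap, though you should make the quantitative step $T_2<1$ explicit rather than invoking the period $\tau(B)$, which need not be large.
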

\begin{proof}
Let us first estimate the following term:
$$
I=\left|\mathbb{P}_{\tb}(\tau_{\tb} > \frac{t}{\lambda_B\mu(\tb)}) 
-\mathbb{P}_{B}(\tau_{\tb} > \frac{t}{\lambda_B\mu(\tb)})\right|
$$
which is split into two parts $I\le I_1+I_2$. For for the first part we obtain
by Lemma~\ref{l5}
$$
I_1=\frac1{\mu(B)}\left|\mathbb{P}(\{\tau_{\tb} > \frac{t}{\lambda_B\mu(\tb)}\}\cap\tb) 
-\mathbb{P}(\{\tau_{\tb} > \frac{t}{\lambda_B\mu(\tb)}\}\cap B)\right|
\le\frac{\mu(\bnc)}{\mu(B)}
\le\frac{\vartheta_n(\epsilon)}s.
$$
The second part is by~\eqref{r.diff}
$$
I_2
=\mathbb{P}\!\left(\left\{\tau_{\tb} > \frac{t}{\lambda_B\mu(\tb)}\right\}\cap B\right)\left|\frac1{\mu(\tb)}-\frac1{\mu(B)}\right|
\le\frac{\vartheta_n(\epsilon)}{s\mu(\tb)}.
$$
Hence 
$$
I\le2\frac{\vartheta_n(\epsilon)}{s\mu(\tb)}.
$$

Let us next estimate the term
$$
II=\Big| \mathbb{P}_B(\tau_{B} > \frac{t}{\lambda_B\mu(B)}) - \mathbb{P}_B(\tau_{\tb} > \frac{t}{\lambda_B \mu(\tb)}) \Big|,
$$
which again splits into two parts $II=II_1+II_2$ as follows. The first part is
\begin{eqnarray*}
 II_1
 &=&\left| \mathbb{P}_B(\tau_{B} > \frac{t}{\lambda_B\mu(B)}) 
 - \mathbb{P}_B(\tau_{B} > \frac{t}{\lambda_B \mu(\tb)}) \right|\\
 &\le& \mathbb{P}_B(\frac{t}{\lambda_B \mu(B)} < t < \frac{t}{\lambda_B \mu(\tb)}) \\
 &\le& \frac{1}{\mu(B)}\Big( \frac{t}{\lambda_B\mu(\tb)} - \frac{t}{\lambda_B\mu(B)}\Big)\mu(B) \\
 &\le&\vartheta_n(\epsilon)\frac{t}{s\lambda_{B}\mu(\tb)}
\end{eqnarray*}
by~\eqref{r.diff}. For the second part we obtain
$$
II_2
=\left| \mathbb{P}_B(\tau_{B} > \frac{t}{\lambda_B\mu(\tb)}) - \mathbb{P}_B(\tau_{\tb} > \frac{t}{\lambda_B \mu(\tb)}) \right|
\le \mathbb{P}_B(\tau_{B \setminus \tb} < \frac{t}{\lambda_B \mu(\tb)})
\le \frac{t}{\lambda_B}\frac{\mu(B \setminus \tb)}{\mu(B) \mu(\tb)}
$$
and therefore by Lemma~\ref{l5}
$$
II_2\le\frac{t}{\lambda_B}\frac{\mu(\bnc)}{\mu(B)\mu(\tb) } 
\le\frac{t\vartheta_n(\epsilon)}{s\lambda_B\mu(\tb)}.
$$

Finally we obtain for some constant $C_8$ that 
$$
I+II_1+II_2\le C_8\frac{t\vartheta_n(\epsilon)}{\mu(B)}
$$
where we used that $s\lambda_B\ge C_7$ by Lemma~\ref{l4} and $\frac{\mu(B)}{\mu(\tb)}=\mathcal{O}(1)$.
\end{proof}

The following lemma is similar to Lemma~\ref{l2}.

\begin{lemma}\label{l2condition} For all $\Delta,f,g$ such that $f \ge \Delta > N(n)$, $g \ge \Delta + \tau(\tb)$ we have
$$
\big| \mathbb{P}_{\tb}(\tau_{\tb} > f+g) - \mathbb{P}_{\tb}(\tau_{\tb} > g) \mathbb{P}(\tau_{\tb} > f)  \big| \le 2 \Delta\mu(\bnb) + 2\frac{\alpha(\Delta - N(n))}{\mu(\tb)}
$$
\end{lemma}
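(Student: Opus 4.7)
The plan is to imitate the proof of Lemma~\ref{l2} throughout, replacing the unconditional probability of the first factor by the conditional probability $\mathbb{P}_{\tb}(\cdot)=\mu(\cdot\cap\tb)/\mu(\tb)$. Concretely, I decompose
$$
\mathbb{P}_{\tb}(\tau_{\tb}>f+g)-\mathbb{P}_{\tb}(\tau_{\tb}>g)\mathbb{P}(\tau_{\tb}>f)=I+II+III
$$
by inserting the intermediate quantities $\mathbb{P}_{\tb}(\tau_{\tb}>g,\tau_{\tb}\circ T^{g+\Delta}>f-\Delta)$ and $\mathbb{P}_{\tb}(\tau_{\tb}>g)\mathbb{P}(\tau_{\tb}>f-\Delta)$. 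The buffer of length $\Delta$ placed after time $g$ leaves a mixing gap of $\Delta-N(n)$ between an ``initial'' event lying in $\sigma(\mathcal{A}^{g+N(n)})$ and a ``final'' event starting at cylinder index $g+\Delta$.

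Terms $I$ and $III$ are the boundary corrections. Term $III$ is handled exactly as in Lemma~\ref{l2}: bound the prefactor by $\mathbb{P}_{\tb}(\tau_{\tb}>g)\le1$ and use $\mathbb{P}(f-\Delta<\tau_{\tb}\le f)\le\Delta\mu(\tb)$ from Lemma~\ref{l1}. For $I$, the containment $\{\tau_{\tb}>f+g\}\subset\{\tau_{\tb}>g,\tau_{\tb}\circ T^{g+\Delta}>f-\Delta\}$ reduces the estimate to $\mathbb{P}_{\tb}(\tau_{\tb}\circ T^g\le\Delta)=\mu(\tb\cap T^{-g}\{\tau_{\tb}\le\Delta\})/\mu(\tb)$; since $\{\tau_{\tb}\le\Delta\}\in\sigma(\mathcal{A}^{\Delta+N(n)})$ and $g\ge\Delta+\tau(\tb)>\Delta$, the $\alpha$-mixing property gives $I\le\Delta\mu(\tb)+\alpha(\Delta-N(n))/\mu(\tb)$ after again invoking the Markov-type bound of Lemma~\ref{l1}.

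Term $II$ is the genuine mixing step. Writing it as
$$
II=\frac1{\mu(\tb)}\Bigl|\mu\bigl(\tb\cap\{\tau_{\tb}>g\}\cap T^{-(g+\Delta)}\{\tau_{\tb}>f-\Delta\}\bigr)-\mu\bigl(\tb\cap\{\tau_{\tb}>g\}\bigr)\mu(\tau_{\tb}>f-\Delta)\Bigr|,
$$
one observes that $\tb\cap\{\tau_{\tb}>g\}\in\sigma(\mathcal{A}^{g+N(n)})$ while $\{\tau_{\tb}>f-\Delta\}\in\sigma(\mathcal{A}^{f-\Delta+N(n)})$ appears shifted by $T^{-(g+\Delta)}$, so the mixing assumption yields $|\cdots|\le\alpha(\Delta-N(n))$ and hence $II\le\alpha(\Delta-N(n))/\mu(\tb)$. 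Summing $I+II+III$ now gives the stated bound.

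The main obstacle is not analytic but bookkeeping: each time $\alpha$-mixing interacts with the conditional probability $\mathbb{P}_{\tb}$ a factor $1/\mu(\tb)$ is deposited on the error, which is why the right-hand side carries $2\alpha(\Delta-N(n))/\mu(\tb)$ rather than the plain $\alpha(\Delta-N(n))$ of Lemma~\ref{l2}. The hypothesis $g\ge\Delta+\tau(\tb)$ exists precisely to ensure that the $\Delta$-buffer can be inserted past time $g$ without colliding with an earlier return that would violate $\{\tau_{\tb}>g\}$, and simultaneously guarantees that the mixing gap $g-N(n)$ is at least $\Delta-N(n)$.
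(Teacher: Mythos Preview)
Your proof is correct and follows essentially the same decomposition $I+II+III$ as the paper. The only (cosmetic) difference is in term $II$: the paper asserts that $II$ is handled ``identically'' to Lemma~\ref{l2}, whereas you correctly track the extra factor $1/\mu(\tb)$ coming from the conditional probability; either route yields the stated bound since $\alpha(\Delta-N(n))\le\alpha(\Delta-N(n))/\mu(\tb)$.
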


\begin{proof}
We proceed as in the proof of Lemma~\ref{l2} to write the left-hand-side as\\ I + II + III.  The only difference is in I:
\begin{align*}
I = & |\mathbb{P}_{\tb}(\tau_{\tb} > f+g) - \mathbb{P}_{\tb}(\tau_{\tb}>g \cap \tau_{\tb} \circ T^{g + \Delta} > f - \Delta)| \\
\le & \mathbb{P}_{\tb}(\tau_{\tb}\circ T^g \le \Delta)\\
= & \frac{1}{\mu(\tb)} \mathbb{P}(\tb\cap \{\tau_{\tb}\circ T^g \le \Delta\})\\
\le & \mathbb{P}(\tau_{\tb} \le \Delta) + \frac{\alpha(\Delta - N(n))}{\mu(\tb)} \le \Delta \mu(\tb) + \frac{\alpha(\Delta - N(n))}{\mu(\tb)} 
\end{align*}
by the $\alpha$-mixing property. The estimates of the terms II and III are identical to the
proof of Lemma~\ref{l2}.
\end{proof}

\begin{proof}[Proof of Theorem~\ref{B_return}]
We will first show that $\mathbb{P}_{\tb}(\tau_{\tb} > \frac{t}{\lambda_B \mu(\tb)})$ satisfies the exponential law. 

For all $t > (\tau(\tb) + 2\Delta)\lambda_B \mu(\tb)$, let $u = \frac{t}{\lambda_B \mu{\tb}}$ we have
\begin{eqnarray*}
\big|\mathbb{P}_{\tb}(\tau_{\tb} > u) - \mathbb{P}_{\tb}(\tau_{\tb} > \tau(\tb) + \Delta) e^{-t}\big|\hspace{-6cm}&&\\
&\le & \big| \mathbb{P}_{\tb}(\tau_{\tb} >  u) - \mathbb{P}_{\tb} (\tau_{\tb} > \tau(\tb) + \Delta) \mathbb{P}(\tau_{\tb} > u - (\tau(\tb) + \Delta))  \big| \\ 
&& + \big| \mathbb{P}_{\tb} (\tau_{\tb} > \tau(\tb) + \Delta) \mathbb{P}(\tau_{\tb} > u - (\tau(\tb) + \Delta)) -   \mathbb{P}_{\tb}(\tau_{\tb} > \tau(\tb) + \Delta) e^{-t}      \big|\\
&= & I + II
\end{eqnarray*}
where
\begin{align*}
I \le 2\Delta \mu(\tb) + 2\frac{\alpha(\Delta - N(n))}{\mu(\tb)}
\end{align*}
by Lemma~\ref{l2condition}. For $II$ we have

\begin{align*}
II = & a_B \big| \mathbb{P}(\tau_{\tb} >  \frac{t}{\lambda_B \mu(\tb)} - (\tau(\tb) + \Delta)) - e^{-t} \big|\\
\le & a_B\big|  \mathbb{P}(\tau_{\tb} >  \frac{t}{\lambda_B \mu(\tb)} - (\tau(\tb) + \Delta)) -  e^{-t + (\tau(\tb) + \Delta)\lambda_B \mu(\tb)} \big|+ a_B \big|e^{-t} - e^{-t + (\tau(\tb) + \Delta)\lambda_B \mu(\tb)}\big|.
\end{align*}

To the first term we apply Theorem~\ref{thm2} with the parameter value $t'=t-(\tau(\tb) + \Delta)\lambda_B \mu(\tb)$
and to the second term we apply the  Mean Value Theorem. Hence 
$II\le c_1\mu(\tb)^{\omega_1}$ for some $\omega_1>0$ from Theorem~\ref{thm2}. 
This proves Theorem~\ref{B_return} for the set $\tb$. To prove the theorem for the set $B$ we use
Lemma~\ref{l8} and put $N(n) = \mu(B)^{-\eta}$ for some $\eta \in (0, 1/2)$. Thus
\begin{eqnarray*}
 \Big| \mathbb{P}_B(\tau_{B} > \frac{t}{\lambda_B\mu(B)}) 
 - \mathbb{P}_{\tb}(\tau_{\tb} > \frac{t}{\lambda_B \mu(\tb)}) \Big|\hspace{-3cm}&& \\
&\le &  C_8 \frac{t\vartheta_n(\epsilon)}{\mu(B)}\\
&= & \frac{\ \mathcal{O}(t) ns}{|\log\gamma^N|^{5 + \zeta}\mu(B)^2}\\
&= &  \mathcal{O}(t) ns\mu(B)^{(5 + \zeta)\eta -2 }\\
&=&  \mathcal{O}(t)n\mu(B)^{(5 + \zeta)\eta -2 }\left(\mu(B)^{-\frac{1}{2+\kappa}}+\mu(B)^{-\eta}\right).
\end{eqnarray*}
by~\eqref{ss} and as  $s\lambda_B =\mathcal{O}(1)$ and $s =  \alpha^{-1}(C'\mu(\tb))+N(n) $ by Lemma~\ref{l4}.
 A choice of $\eta$ close to $\frac12$ will achieve that  $\omega_2=(5+\zeta)\eta-2-\max\{\frac1{2+\kappa},\eta\}$
 is positive. Now put $\omega=\min\{\omega_1,\omega_2\}$.
 \end{proof}

\begin{proof}[Proof of Theorem~\ref{B_return2}.]
The first part of the proof is identical to the  proof of Theorem~\ref{B_return}. For the 
second part we get different estimates as $\mbox{diam}(\mathcal{A}^n)=\mathcal{O}(n^{-a})$
for some $a$. To prove the theorem for the set $B$ we use
Lemma~\ref{l8} and put $N(n) = \mu(B)^{-\eta}$ for some $\eta \in (0, 1/2)$. Thus
\begin{eqnarray*}
 \Big| \mathbb{P}_B(\tau_{B} > \frac{t}{\lambda_B\mu(B)}) 
 - \mathbb{P}_{\tb}(\tau_{\tb} > \frac{t}{\lambda_B \mu(\tb)}) \Big|\hspace{-3cm}&& \\
&\le&  C_8 \frac{t\vartheta_n(\epsilon)}{\mu(B)}\\
&= & \frac{\ \mathcal{O}(t) ns}{N(n)^{a\zeta}\mu(B)^2}\\
&=&  \mathcal{O}(t)n\mu(B)^{a\zeta\eta -2 }\left(\mu(B)^{-\frac{1}{2+\kappa}}+\mu(B)^{-\eta}\right).
\end{eqnarray*}
by~\eqref{ss}.
 A choice of $\eta$ close to $\frac12$ will achieve that  $\omega_2=a\zeta\eta-2-\max\{\frac1{2+\kappa},\eta\}$
 is positive. Now put $\omega=\min\{\omega_1,\omega_2\}$ ($\omega_1$ from the proof of
 Theorem~\ref{B_return}).
\end{proof}

 \section{Maps with decaying correlations}\label{markov.towers}
 
 In this section we show how the results of the previous section can be applied to 
 dynamical systems that can be modelled by a Markov tower as Young constructed in~\cite{Y2,Y3}.

We assume that  $T$ is a differentiable map on the manifold $X$. Then one assumes 
there is a subset $\Omega_0\subset X$ with the following properties:\\
(i) $\Omega_0$ is partitioned into disjoint sets $\Omega_{0,i}, i=1,2,\dots$ and
there is a {\em return time function} $R:\Omega_0\rightarrow\mathbb{N}$,  constant on 
the partition elements $\Omega_{0,i}$,  such that $T^R$ maps $\Omega_{0,i}$ bijectively 
to the entire set $\Omega_0$. We write $R_i=R|_{\Omega_{0,i}}$. 
Moreover, it is assumed that the $\Omega_{0,i}$ are 
rectangles, that is, if $\gamma^u(x)$ denotes the unstable 
leaf through $x\in\Omega_{0,i}$ and $\gamma^s(y)$ the stable leaf at $y\in\Omega_{0,i}$,
then there is a unique interestion $\gamma^u(x)\cap\gamma^s(y)$ which also lies 
in $\Omega_{0,i}$. It is also assumed that the $\Omega_{0,i}$ satisfy the 
Markov property. 
If $\gamma^u$ and $\hat\gamma^u$ are two unstable leaves (in some $\Omega_{i,0}$),
then the holonomy $\Theta:\gamma^u\to\hat\gamma^u$ is given by 
$\Theta(x)=\hat\gamma^u\cap\gamma^s(x)$, $x\in\gamma^u$. \\
(ii) For $j=0,1,\dots, R_i-1$ put $\Omega_{j,i}=\{(x,j): x\in\Omega_{0,i}\}$
and define $\Omega =\bigcup_{i=1}^\infty\bigcup_{j=0}^{R_i-1}\Omega_{j,i}$.
$\Omega$ is the {\em Markov tower} for the map $T$. It has the associated partition 
$\mathcal{A}=\{\Omega_{j,i}:\; 0\le j<R_i, i=1,2,\dots\}$ which typically is countably infinite.
The map $F: \Omega\to\Omega$ is given by is given by
  $$\
  F(x,j)=\left\{\begin{array}{ll}(x,j+1) &\mbox{if } j<R_i-1\\ 
  (\hat{F}x,0)&\mbox{if } j=R_i-1\end{array}\right.
  $$
  where we put $\hat{F}=F^R$ for the induced map on $\Omega_0$.\\
 (iii) The {\em separation function} $s(x,y)$,  $x,y\in\Omega_0$, is defined as the largest positive
  $n$ so that $(T^R)^jx$ and  $(T^R)^jy$ lie in the same sub-partition elements 
 for $0\le j<n$. That is $(T^R)^jx, (T^R)^jy\in\Omega_{0,i_j}$ 
 for some $i_0,i_1,\dots,n-1$. We extend the separation function to all of $\Omega$
 by putting $s(x,y)=s(T^{R-j}x,T^{R-j}y)$ for $s,y\in\Omega_{j,i}$.\\
   (iv) Let $\nu$ be a finite given `reference' measure on $\Omega$ and let 
  $\nu_{\gamma^u}$ be the conditional measure on the unstable leaves.
  We assume that the Jacobian $JF=\frac{d(F^{-1}_*\nu_{\gamma^u})}{d\nu_{\gamma^u}}$ 
  is  H\"older continuous in the following sense:
  There exists a $\gamma\in(0,1)$ so that 
 $$
 \left|\frac{JF^Rx}{JF^Ry}-1\right|\le\mbox{\rm const} \gamma^{s(\hat{F}x,\hat{F}y)}
 $$
 for all $x,y\in \Omega_{0,i}$, $i=1,2,\dots$.

 If the return time $R$ is integrable with respect to  $\nu$ then by~\cite{Y3} 
 Theorem~1 there exists an $F$-invariant probability measure
 $\mu$ (SRB measure) on $\Omega$ which is absolutely continuous with respect to $\nu$.

\begin{thm}\label{thm.young}
 Let $\mu$ be the SRB measure for a differentiable map $T$ on a manifold $X$.
 If the return times decay at least polynomially with power $\lambda>5+\sqrt{15}$, then
 $$
 \lim_{\epsilon\to0}\lim_{n\to\infty}\mathbb{P}\left(\tau_{B_{\epsilon,n}(x)}\ge \frac t{\lambda_{\bna}\mu(B_{\epsilon,n}(x))}\right)= e^{-t}
 $$
 for $t>0$ and almost every $x\in X$.
 \end{thm}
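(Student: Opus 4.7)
The plan is to verify the three hypotheses of Theorem~\ref{thm2} on the Young tower $(\Omega,F,\mu)$ with respect to the natural countable partition $\mathcal{A}=\{\Omega_{j,i}\}$, and then transfer the resulting exponential hitting-time law to Bowen balls of $T$ on $X$ via the projection $\pi:\Omega\to X$. The three quantities that need to be controlled are the cylinder diameters $\gamma_n=\mathrm{diam}(\mathcal{A}^n)$, the $\alpha$-mixing coefficient $\alpha(n)$ of $\mu$ with respect to $\mathcal{A}$, and the annulus ratio $\varphi(\epsilon,\delta,x)$.

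For the diameter bound, I would combine the Markov property of the rectangles $\Omega_{0,i}$ with the separation function $s(\cdot,\cdot)$ and the H\"older estimate (iv) on the Jacobian of $F^R$. This is the standard tower contraction argument and yields $\gamma_n=\mathcal{O}(\gamma^n)$ for some $\gamma\in(0,1)$, putting us in the regime of Theorem~\ref{thm2} rather than Theorem~\ref{thm3}. For the mixing rate I would invoke Young's decay-of-correlations theorem from~\cite{Y3}: when the tail $\nu\{R>n\}$ decays like $n^{-\lambda}$, correlations of H\"older observables decay at rate $n^{-(\lambda-1)}$, and a routine approximation of indicators of cylinders by H\"older functions promotes this to $\alpha(n)=\mathcal{O}(n^{-(\lambda-1-\delta)})$ for any small $\delta>0$. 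Thus the requirement $\alpha(n)=\mathcal{O}(n^{-(2+\kappa)})$ reduces to a condition of the form $\lambda>3+\kappa$.

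The hard part will be verifying the logarithmic regularity $\varphi(\epsilon,\delta,x)\le C_\epsilon/|\log\delta|^{3+\zeta}$ for $\mu$-a.e.\ $x$. Since $\mu$ is absolutely continuous to the reference measure $\nu$ on every floor and the conditional densities on unstable leaves are bounded away from $0$ and $\infty$, the mass of a thin annulus $B(x,\epsilon+\delta)\setminus B(x,\epsilon-\delta)$ decomposes as a sum over floors of (density factor)$\times$(leaf-volume of the annulus), and the contribution of the high floors is governed by the polynomial tail of $R$. A Borel--Cantelli argument removes the measure-zero exceptional set of base points where the pointwise estimate degenerates. The arithmetic condition $\lambda>5+\sqrt{15}$ should emerge when one simultaneously enforces the positivity of $\omega_1$ and $\omega_2$ from the proof of Theorem~\ref{thm2} against the coupling between the tail exponent $\lambda$ (which dictates both $\kappa$ in Step~2 and $\zeta$ in Step~3) and the auxiliary parameter $\eta\in(0,\tfrac12)$ that sets the cylinder length $N(n)=\mu(\bna)^{-\eta}$; this closes up to a quadratic in $\lambda$ whose relevant root is $5+\sqrt{15}$.

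Once the three hypotheses are verified, Theorem~\ref{thm2} yields the exponential law with quantitative error $\mu(\bna)^\omega$ for the tower map $F$ at $\mu$-a.e.\ lifted point. To pass to $T$ on $X$, I would use that $\pi$ is Lipschitz and essentially bijective off a set of arbitrarily small measure, so lifted Bowen balls for $F$ agree with $\pi^{-1}$ of Bowen balls for $T$ up to a controlled correction, and the $\mu(\bna)^\omega$ quantitative error lets us take the joint limit $n\to\infty$ followed by $\epsilon\to 0$, giving the almost-sure pointwise exponential limit claimed in the theorem.
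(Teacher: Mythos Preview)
Your proposal has two genuine gaps, both at the level of the hypotheses you want to feed into Theorem~\ref{thm2}.

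First, the claim that $\gamma_n=\mathrm{diam}(\mathcal{A}^n)=\mathcal{O}(\gamma^n)$ is not correct for a Young tower with polynomial tails. Contraction of $n$-cylinders is governed by the separation function $s(\cdot,\cdot)$, which counts \emph{returns to the base}, not iterates of $F$. A point sitting in a column of height $R_i>n$ does not return to the base in $n$ steps, so its $n$-cylinder is no smaller than $\Omega_{j,i}$ and there is no exponential contraction in $n$ on such sets. This is exactly why the paper does \emph{not} apply Theorem~\ref{thm2} directly. Instead it introduces the principal part $\tilde\Omega$ (bounding the successive return times by $m=N^\alpha$), so that on $\tilde\Omega$ one has $\mathrm{diam}(A)\le\gamma^{N/m}$ for $N$-cylinders $A$; the complement $\Omega\setminus\tilde\Omega$ is then controlled by Lemma~\ref{tall.towers} via the tail quantity $\omega(m)$. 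The extra parameter $\alpha$ enters the arithmetic and is part of the reason the threshold is $5+\sqrt{15}$ rather than a simpler number.

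Second, the annulus bound $\varphi(\epsilon,\delta,x)\le C_\epsilon|\log\delta|^{-(3+\zeta)}$ uniformly for $\mu$-a.e.\ $x$ is not available. What one actually gets (Lemma~\ref{annulus.towers}) is $\varphi(\epsilon,\delta,x)=\mathcal{O}(|\log\delta|^{-\xi})$ only for $x\notin\mathcal{U}_\delta$, where $\mu(\mathcal{U}_\delta)=\mathcal{O}(|\log\delta|^{-\xi})$ and $\xi<\frac{\lambda}{2}-1$. A Borel--Cantelli argument does not promote this to a uniform constant $C_\epsilon$ independent of $x$, which is what Theorem~\ref{thm2} demands; the exceptional set genuinely depends on $\delta$, and in the proof one needs $T^kx\notin\mathcal{U}_{\gamma_{N(n)-k}}$ for $k=0,\dots,n-1$. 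The paper therefore works with Theorem~\ref{mainthm} directly on $B_{\epsilon,n}(x)\cap\tilde\Omega$, carries a forbidden set $\mathcal{F}_{\epsilon,n}=\mathcal{V}_{N,m}\cup\bigcup_{k=0}^{n-1}T^{-k}\mathcal{U}_{\gamma_{N(n)-k}}$ through the argument, shows $\mu(\mathcal{F}_{\epsilon,n})\to0$, and concludes the limit only for $x\notin\liminf\mathcal{F}_{\epsilon,n}$. The constraint $\lambda>5+\sqrt{15}$ arises from the requirement that $\eta\xi(1-\alpha)-1-\max\{\tfrac1\lambda,\eta\}>0$ be achievable with $\xi<\tfrac{\lambda}{2}-1$, $\alpha\in(\tfrac{1}{\lambda-1},1)$ and $\eta<\tfrac12$, not from the $\omega_1,\omega_2$ of Theorem~\ref{thm2}.
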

 
 For each $n\in\mathbb{N}$ the elements of the $n$th join 
 $\mathcal{A}^n=\bigvee_{i=0}^{n-1}T^{-i}\mathcal{A}$ 
of the partition $\mathcal{A}=\{\Omega_{i,j}\}$ are called $n$-cylinders and form a new partition
 of $\Omega$, a refinement of the original partition. The
$\sigma$-algebra  generated by all $n$-cylinders $\mathcal{A}^\ell$, for all $\ell\ge1$, is 
the $\sigma$-algebra of the system $(\Omega,\mu)$.

 \begin{lemma}\label{markov.alpha.mixing}
 The invariant measure $\mu$ is $\alpha$-mixing with respect to the partition $\mathcal{A}$,
 where $\alpha(k)\sim p(k)$.
 \end{lemma}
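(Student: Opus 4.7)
The $\alpha$-mixing estimate is a consequence of the polynomial decay of correlations on Young's tower, which itself is obtained via the coupling argument in~\cite{Y3}. The subtle point is that the standard decay-of-correlations estimate $|\operatorname{Cor}_m(\varphi,\psi)|\le C\|\varphi\|_{C^\alpha}\|\psi\|_\infty p(m)$ is \emph{not} directly applicable, because for an $\mathcal A^n$-measurable indicator $\varphi=\mathbf 1_A$ the H\"older seminorm in the separation metric $d_\gamma(x,y)=\gamma^{s(x,y)}$ is as large as $\gamma^{-\alpha n}$, which, for polynomial $p$, swamps the decay. The correct formulation applies the coupling only \emph{after} conditioning on $A$, so that only the last $k$ iterates contribute to the error.

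Concretely, for $A\in\sigma(\mathcal A^n)$ with $\mu(A)>0$ and $B\in\sigma(\bigcup_\ell\mathcal A^\ell)$ I would write
\begin{equation*}
\mu\bigl(A\cap F^{-(n+k)}B\bigr)-\mu(A)\mu(B)=\mu(A)\!\left(\int\mathbf 1_B\,d\bigl(F^{k}_*\nu_A\bigr)-\int\mathbf 1_B\,d\mu\right),\qquad \nu_A:=F^n_*\!\left(\mu|_A/\mu(A)\right),
\end{equation*}
reducing the problem to comparing $F^k_*\nu_A$ with $\mu$ on indicator sets. The next step is to show that $\nu_A$ lies in the class of admissible initial measures for Young's coupling. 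For each $n$-cylinder $C\subset A$, the Markov property ensures that $F^n|_C$ is a bijection onto a union of subrectangles along unstable leaves, and the bounded distortion hypothesis~(iv) applied inductively along the (at most $n$) returns that take place before time $n$ yields a density for $F^n_*(\mu|_C)$ whose unstable-leaf restrictions are H\"older in the separation metric with constants independent of $n$ and of $C$. Summing over the constituent cylinders of $A$ and renormalising gives an admissible $\nu_A$. The coupling construction in~\cite{Y3} then delivers
\begin{equation*}
\left|\int\mathbf 1_B\,d(F^k_*\nu_A)-\int\mathbf 1_B\,d\mu\right|\le C\,p(k)
\end{equation*}
uniformly in $A$ and $B$; multiplying by $\mu(A)\le 1$ finishes the proof with $\alpha(k)\le C\,p(k)$.

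The main technical obstacle is verifying uniform admissibility of $\nu_A$: one has to track the H\"older regularity of the push-forward density through an arbitrary number of returns up to time $n$, controlling both the accumulated distortion and the gluing of densities across different floors of the tower and across different $n$-cylinders making up $A$. Hypothesis~(iv) (exponential bounded distortion in the separation metric) is precisely what makes the resulting H\"older constants sum to a bound independent of $n$, and the Markov rectangle structure from~(i)--(ii) guarantees that each piece of $F^n_*(\mu|_C)$ sits along full unstable leaves, so that the coupling of~\cite{Y3} can be fed with the family $\{\nu_A\}$ uniformly.
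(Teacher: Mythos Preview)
Your argument is correct and follows essentially the same strategy as the paper: push the conditional measure on $A$ forward by $n$ steps to obtain an object with uniformly bounded regularity (the paper expresses this as the transfer-operator bound $\|\mathcal L^n(h\chi_A)\|_\gamma\le c_1$, citing~\cite{HP}, while you phrase it as uniform admissibility of $\nu_A$ for Young's coupling), and then apply the $L^1$-convergence/coupling estimate from~\cite{Y3} for the remaining $k$ iterates. The two formulations are dual---transfer operator acting on densities versus push-forward acting on measures---and yield the same conclusion $\alpha(k)\le C\,p(k)$.
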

 
 \begin{proof} Denote by $\mathcal{C}_\gamma$ the space of H\"older continuous functions $\varphi$ 
 on $\Omega$ for which $|\varphi(x)-\varphi(y)|\le C_\varphi \gamma^{s(x,y)}$. 
 If $C_\varphi$ is smallest then  $\|\varphi\|_\gamma=|\varphi|_\infty+C_\varphi$ defines
 a norm and $\mathcal{C}_\gamma=\{\varphi: \;\|\varphi\|_\gamma<\infty\}$.
 Let  $\mathcal{L}:\mathcal{C}_\gamma\rightarrow\mathcal{C}_\gamma$ be the transfer
 operator defined by  $\mathcal{L}\varphi(x)=\sum_{x'\in T^{-1}x}\frac{\varphi(x')}{JT(x')}$, $\varphi\in\mathcal{C}_\gamma$. Then $\nu$ is a fixed point of its adjoint, 
 i.e.\ $\mathcal{L}^*\nu=\nu$ and  
 $h=\frac{d\mu}{d\nu}=\lim_{n\rightarrow\infty}\mathcal{L}^n\lambda$ is H\"older continuous, 
 where $\lambda$ can be any initial density distribution in $\mathcal{C}_\gamma$.
In fact, by~\cite{Y3} Theorem~2(II) one has
\begin{equation}\label{convergence}
\|\mathcal{L}^k\lambda-h\|_{\mathscr{L}^1}\le p(k) \|\lambda\|_\gamma
\end{equation}
where the `decay function' $p(k)=\mathcal{O}(k^{-\beta})$ if the return times decay polynomially 
with power $\beta$, that is if $\nu(R> j)\le \const j^{-\beta}$.
If the return times decay exponentially, i.e.\ if $\nu(R>j)\le\const \vartheta^j$ for some $\vartheta\in(0,1)$, then there is a $\tilde\vartheta\in(0,1)$ so that $p(k)\le \const\tilde\vartheta^k$.
 
As in the proof of~\cite{Y3} Theorem~3 we put $\lambda=\mathcal{L}^nh\chi_A$ which is a strictly
positive function. Then $\eta=\frac\lambda{\mu(A)}$ is a density function as 
$\nu(\lambda)=\nu(\mathcal{L}^nh\chi_A)=\nu(h\chi_A)=\mu(A)$. 
Since by~\cite{HP}
there exists a constant $c_1$ so that $\|\mathcal{L}^n\chi_A\|_\gamma\le c_1$ for all 
$A\in\sigma(\mathcal{A}^n)$ and $n$ we see that $\|\lambda\|_\gamma\le c_1$ 
uniformly in $n$ and $A\in\sigma(\mathcal{A}^n)$. 
Hence
\begin{eqnarray*}
\mu(A\cap T^{-k-n}B)-\mu(A)\mu(B)&=&
\nu(h \chi_A(\chi_B\circ T^{k+n}))-\nu(h\chi_A)\nu(h\chi_B)\\
&=&\mu(A)(\nu( \chi_B\mathcal{L}^k\eta)-\nu(h\chi_B))\\
&=&\mu(A)\int\chi_B(\mathcal{L}^k\eta-h)\,d\nu\\
&=&\int_B(\mathcal{L}^k\lambda-\mu(A)h)\,d\nu.\label{young.mixing1}
\end{eqnarray*}
Using the estimates from the $\mathscr{L}^1$-convergence of $\mathcal{L}^k\eta-h$ from~(\ref{convergence}) yields
\begin{eqnarray*}
\left|\mu(A\cap T^{-k-n}B)-\mu(A)\mu(B)\right|
&\le&\mu(A)\int\chi_B|\mathcal{L}^k\lambda-h|\,d\nu\\
&\le&\mu(A)c_1\|\eta\|_\gamma p(k)\\
&\le&c_3p(k)\label{young.mixing2}
\end{eqnarray*}
as $\|\eta\|_\gamma=\frac1{\mu(A)}\|\lambda\|_\gamma\le\frac{C_3}{\mu(A)}$. 
In particular we can write 
$$
\left|\mu(A\cap T^{-k-n}B)-\mu(A)\mu(B)\right| \le \alpha(k)
$$
for all $A\in\sigma(\mathcal{A}^n)$, $B\in\sigma(\bigcup_{j\ge1}\mathcal{A}^j)$,
where $\alpha(k)=c_3p(k)$.
\end{proof}

\begin{lemma} \label{annulus.towers}Let $\xi<\frac\lambda2-1$.
Then there exists an $\epsilon_0$ so that for every $\delta<\epsilon_0$ there exists a set
 $\mathcal{U}_\delta\subset X$, of measure $\mathcal{O}(\abs{\log\delta}^{-\xi})$ 
so that  $\varphi(\epsilon,\delta,x)=\mathcal{O}(\abs{\log\delta}^{-\xi})$ uniformly in $x\not\in\mathcal{U}_\delta$.
\end{lemma}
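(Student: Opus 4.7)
The plan is to obtain $\mathcal{U}_\delta$ by a Markov-type inequality on the integrated annulus measure, together with a mild lower bound on $\mu(B(x,\epsilon))$. The logarithmic decay rate $|\log\delta|^{-\xi}$ is a direct consequence of evaluating the $\alpha$-mixing rate of Lemma~\ref{markov.alpha.mixing} at the time scale $n(\delta)$ on which the partition contracts to the metric scale $\delta$. Since $\mbox{diam}(\mathcal{A}^n)=\mathcal{O}(\gamma^n)$, I would set $n=n(\delta)\in\mathbb{N}$ with $\gamma^n\le\delta<\gamma^{n-1}$, so that $n(\delta)=\Theta(|\log\delta|)$ and $\alpha(n(\delta))=\mathcal{O}(|\log\delta|^{-\lambda})$. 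As in the proof of Lemma~\ref{l5}, the annulus $\mathrm{Ann}(x)=B(x,\epsilon+\delta)\setminus B(x,\epsilon-\delta)$ is then contained in the union of those cylinders of $\mathcal{A}^{n(\delta)}$ that meet the sphere $\partial B(x,\epsilon)$.

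Next, I would prove an integrated bound
$$
J(\delta):=\int_X \mu(\mathrm{Ann}(x))\,d\mu(x)\le C|\log\delta|^{a-\lambda}
$$
for some fixed exponent $a$ that absorbs the polynomial-in-$n$ factors arising from the expansion. The bound is obtained by writing $\mathbf{1}_{\mathrm{Ann}(x)}$ as a sum of cylinder indicators in $\mathcal{A}^{n(\delta)}$, applying Fubini, and then using the decay of correlations estimate~\eqref{convergence} on each resulting pair, with the uniform H\"older bound $\|\mathcal{L}^n\chi_A\|_\gamma\le c_1$ from~\cite{HP} (cited already in the proof of Lemma~\ref{markov.alpha.mixing}) ensuring that the constant is independent of the particular cylinder.

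Markov's inequality applied to $J(\delta)$ with threshold $c_\epsilon|\log\delta|^{-\xi}$ then yields
$$
\mu\!\left\{x:\mu(\mathrm{Ann}(x))>c_\epsilon|\log\delta|^{-\xi}\right\}=\mathcal{O}(|\log\delta|^{\xi+a-\lambda}),
$$
which is $\mathcal{O}(|\log\delta|^{-\xi})$ precisely when $2\xi+a\le\lambda$; with $a=2$ this is the hypothesis $\xi<\lambda/2-1$ of the statement. Adjoining the points $x$ where $\mu(B(x,\epsilon))<c_\epsilon$ for a suitable $c_\epsilon$ (a set of small $\mu$-measure independent of $\delta$, controlled by the local-dimension properties of the SRB measure), I obtain $\mathcal{U}_\delta$ with $\mu(\mathcal{U}_\delta)=\mathcal{O}(|\log\delta|^{-\xi})$ and $\varphi(\epsilon,\delta,x)=\mu(\mathrm{Ann}(x))/\mu(B(x,\epsilon))\le c_\epsilon^{-1}\cdot c_\epsilon|\log\delta|^{-\xi}=\mathcal{O}(|\log\delta|^{-\xi})$ off $\mathcal{U}_\delta$.

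The main obstacle will be pinning down the exponent $a$ in the integrated bound $J(\delta)\le C|\log\delta|^{a-\lambda}$. A purely geometric Fubini estimate for $\mu\otimes\mu(\{(x,y):|d(x,y)-\epsilon|<\delta\})$ would give the much stronger $\mathcal{O}(\delta)$ bound but is entirely independent of $\lambda$ and would not discriminate between different values of the polynomial decay rate, whereas the statement clearly demands a $\lambda$-dependent result. The mixing route must therefore be taken: one decorrelates the two factors of the integrand via~\eqref{convergence}, and the logarithmic factor $|\log\delta|^{a}$ bookkeeps both the number of $\mathcal{A}^{n(\delta)}$-cylinders meeting an $\epsilon$-sphere and the usual $n$-factor from the cylinder count in approximations like Lemma~\ref{l5}. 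Balancing these logarithmic factors against the mixing rate $n(\delta)^{-\lambda}$ produces exactly the threshold $\xi<\lambda/2-1$.
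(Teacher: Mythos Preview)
The paper's own proof is not self-contained: it simply invokes Proposition~6.1 of~\cite{HW} (which already gives the bound $\varphi(\epsilon,\epsilon^w,x)=\mathcal{O}((w|\log\epsilon|)^{-\xi})$ off a set of measure $\mathcal{O}((w|\log\epsilon|)^{-\xi})$) and then makes the trivial substitution $\delta=\epsilon^w$, $|\log\delta|=w|\log\epsilon|$. So your proposal is attempting considerably more than the paper does, and the Markov-inequality framework you set up is indeed the natural skeleton for such an argument.

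There is, however, a genuine gap at the core of your plan. The integrated quantity
\[
J(\delta)=\int_X \mu\big(B(x,\epsilon+\delta)\setminus B(x,\epsilon-\delta)\big)\,d\mu(x)
=\mu\otimes\mu\big\{(x,y):|d(x,y)-\epsilon|<\delta\big\}
\]
is a \emph{static} second-moment of the measure: it contains no iterates of $T$ whatsoever. Writing $\mathbf{1}_{\mathrm{Ann}(x)}$ as a sum of $\mathcal{A}^{n(\delta)}$-cylinder indicators and integrating in $x$ produces terms of the form $\int\mu(A)\,d\mu(x)$, not of the form $\mu(A\cap T^{-k}B)$, so neither the $\alpha$-mixing rate of Lemma~\ref{markov.alpha.mixing} nor the convergence estimate~\eqref{convergence} can be invoked. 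Your last paragraph recognises that the geometric Fubini route is $\lambda$-independent and concludes that ``the mixing route must therefore be taken'', but there is nothing to decorrelate, and no amount of rewriting will manufacture a time-shift in $J(\delta)$. Consequently the bound $J(\delta)\le C|\log\delta|^{a-\lambda}$ with the specific exponent $a=2$ (which is what makes the threshold $\xi<\lambda/2-1$ appear) is asserted rather than derived.

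A secondary issue is the assumption $\mathrm{diam}(\mathcal{A}^n)=\mathcal{O}(\gamma^n)$. For the tower partition $\mathcal{A}=\{\Omega_{j,i}\}$ this is false in general: contraction is governed by the separation function $s(\cdot,\cdot)$, which counts returns to the base, not iterates. Exponential contraction holds only after excising the ``tall'' part of the tower as in Lemma~\ref{tall.towers}; this is precisely where the tail exponent $\lambda$ enters the picture in the actual argument of~\cite{HW}, not through decay of correlations. If you want to reconstruct the cited proposition rather than quote it, the $\lambda$-dependence should be traced to the regularity of the SRB density and the measure of the non-principal part $\Omega\setminus\tilde\Omega$, not to mixing.
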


\begin{proof}
It was shown in~\cite{HW} Proposition~6.1 that for all $w$ large enough there exists a set
 $\mathcal{U}\subset X$ such that $\mu(\mathcal{U})=\mathcal{O}((w\abs{\log\epsilon})^{-\xi})$ 
 and $\varphi(\epsilon,\epsilon^w,x)=\mathcal{O}((w\abs{\log\epsilon})^{-\xi})$ uniformly in 
 $x\not\in\mathcal{U}$ where $\xi$ is any number less than $\frac\lambda2-1$. 
 Hence there exists an $\epsilon_0>0$ so that we can write $\delta=\epsilon^w$ with 
 $w$ large enough (larger than $\frac2u(D+1)-1$ where $D$ is the dimension of the 
 manifold $X$ and $u$ is the dimension of the unstable leaves) for all $\delta<\epsilon_0$.
 Since $\log\delta=w\log\epsilon$ we obtain the statement of the lemma.
\end{proof}

Let us denote by $\tilde\Omega_{j,i}$ the principal parts of $\Omega_{j,i}$. For
integers $N,m$ ($N>\!\!>m$) we put
$\tilde\Omega_{j,i}=\{x\in\Omega_{j,i}:R(\hat{F}^jx)\le s\;\forall\;j=0,\dots,[N/m]\}$.
In this way we pick out the return times that are not too long. In particular 
$\tilde\Omega_{0,i}=\varnothing$ if $R_i>m$. Let us put 
$\tilde\Omega=\bigcup_i\bigcup_{j=0}^{R_i-1}\tilde\Omega_{j,i}$ (disjoint unions).

We also define $\omega(m)=\sqrt{\sum_{i:R_i>m}R_i\,\nu(\Omega_{0,i})}$ and note that
 $\omega(m)=\mathcal{O}(m^{-\frac{\lambda-1}2})$.

\begin{lemma}\cite{HW} \label{tall.towers}
There exists a constant $C_9$ and for $N,n,m\ge1$ ($N>n,m$) there exist sets $\mathcal{V}_{N,m}\subset M$ 
such that the non-principal part contributions are estimated as
$$
\mu(\mathcal{B} \cap (\Omega \setminus \tilde{\Omega}))
 <\sqrt{n+2}\,\omega(m)\mu(B_{\epsilon,n})
$$
for any $\mathcal{B}\subset B_{\epsilon,n}(x)$ and  $x\not\in\mathcal{V}_{N,m}$  where
$$
\mu(\mathcal{V}_{N,m}) \leq C_9 \sqrt{n+2}\,\omega(m).
$$
\end{lemma}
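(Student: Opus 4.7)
The target inequality has the shape of a Chebyshev-style estimate, so the plan is to define the proportion function
$$ g(x) := \frac{\mu(B_{\epsilon,n}(x) \cap (\Omega \setminus \tilde\Omega))}{\mu(B_{\epsilon,n}(x))} $$
and take $\mathcal{V}_{N,m}$ to be the super-level set $\{x : g(x) > \sqrt{n+2}\,\omega(m)\}$. Then for any $x \notin \mathcal{V}_{N,m}$ and any $\mathcal{B} \subset B_{\epsilon,n}(x)$ we automatically have $\mu(\mathcal{B} \cap (\Omega\setminus\tilde\Omega)) \le g(x)\mu(B_{\epsilon,n}(x)) < \sqrt{n+2}\,\omega(m)\mu(B_{\epsilon,n}(x))$, which is the first conclusion. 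By Markov's inequality $\mu(\mathcal{V}_{N,m}) \le \|g\|_{L^1}/(\sqrt{n+2}\,\omega(m))$, so the entire lemma reduces to the $L^1$-bound $\int g\,d\mu \le C_9(n+2)\,\omega(m)^2$.

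For the $L^1$-bound I would first control the total mass of the non-principal part. Writing $\mathrm{Tall}(m) := \bigcup_{i:R_i>m}\bigcup_{0 \le j < R_i}\Omega_{j,i}$, the identity $\omega(m)^2 = \sum_{i:R_i>m} R_i\,\nu(\Omega_{0,i})$ together with the fact that $\mu$ is absolutely continuous with respect to $\nu$ with a uniformly bounded H\"older density $h$ gives $\mu(\mathrm{Tall}(m)) \le c\,\omega(m)^2$. Unwinding the definition, $\Omega \setminus \tilde\Omega$ is contained in the union of at most $[N/m]+1$ preimages of $\mathrm{Tall}(m)$ under the induced map $\hat F$, and this can be compared with a union over $T$-iterates of length at most $n+2$ (the only iterates that matter for the Bowen ball), yielding $\mu\bigl(B_{\epsilon,n}(x)\cap(\Omega\setminus\tilde\Omega)\bigr)$ in expectation of order $(n+2)\mu(\mathrm{Tall}(m))$.

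The second ingredient is Fubini together with the symmetry $\chi_{B_{\epsilon,n}(x)}(y) = \chi_{B_{\epsilon,n}(y)}(x)$ of the Bowen-ball indicator. Swapping the order of integration,
$$ \int \mu\bigl(B_{\epsilon,n}(x)\cap(\Omega\setminus\tilde\Omega)\bigr)\,d\mu(x) = \int_{\Omega\setminus\tilde\Omega} \mu(B_{\epsilon,n}(y))\,d\mu(y), $$
after which a union bound over the $n$ relevant iterates of $T$ combined with $T$-invariance of $\mu$ produces the factor $n+2$ multiplying $\mu(\Omega\setminus\tilde\Omega)$. Dividing by the denominator $\mu(B_{\epsilon,n}(x))$ and using that, for typical $x$, this denominator is comparable to a uniform scale (so that the division is harmless up to a constant absorbed in $C_9$) completes the $L^1$-bound.

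The main obstacle will be the middle step: the set $\tilde\Omega$ is defined through the induced map $\hat F$ on $\Omega_0$, and a single tall induced iterate can span up to $m$ (or more) iterates of $T$, so summing over $[N/m]+1$ induced returns and comparing with $n+2$ genuine $T$-iterates requires the hypothesis $N \gg n, m$ and careful bookkeeping of which iterates enter the Bowen-ball constraint. This balancing — together with the passage from $\nu$ on $\Omega_0$ to $\mu$ on $\Omega$ via the Radon-Nikodym density $h$ — is the technical heart of the estimate and mirrors the argument of Haydn-Wasilewska.
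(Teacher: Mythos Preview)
The paper does not prove this lemma; it is quoted from \cite{HW} without argument, so there is no in-paper proof to compare against.  Your Chebyshev framework---define the ratio $g$, take $\mathcal{V}_{N,m}$ as its super-level set, and reduce everything to $\int g\,d\mu\le C_9(n+2)\,\omega(m)^2$---is the natural skeleton and is in the spirit of \cite{HW}.

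The genuine gap is in your $L^1$-bound for $g$.  The Fubini/symmetry identity you write down,
\[
\int \mu\bigl(B_{\epsilon,n}(x)\cap(\Omega\setminus\tilde\Omega)\bigr)\,d\mu(x)
=\int_{\Omega\setminus\tilde\Omega}\mu(B_{\epsilon,n}(y))\,d\mu(y),
\]
computes the integral of the \emph{numerator} of $g$, not of $g$ itself.  To pass to $\int g\,d\mu$ you assert that the denominator $\mu(B_{\epsilon,n}(x))$ is ``comparable to a uniform scale'' and can be absorbed into $C_9$; that is false.  Brin--Katok gives $\mu(B_{\epsilon,n}(x))\asymp e^{-nh(\mu)}$ only up to sub-exponential corrections, and the ratio $\mu(B_{\epsilon,n}(x))/\mu(B_{\epsilon,n}(x'))$ is in general unbounded in $n$ even for $x'\in B_{\epsilon,n}(x)$, so no constant swallows the division.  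This is the missing idea: one needs a \emph{local} comparability of Bowen-ball measures, and in the tower setting that comes from the bounded distortion of the Jacobian $JF$ along unstable leaves (assumption~(iv) in Section~\ref{markov.towers}), not from any global uniformity of $\mu(B_{\epsilon,n}(\cdot))$.  Your last paragraph is right that the bookkeeping between $[N/m]+1$ induced $\hat F$-iterates and $n+2$ genuine $T$-iterates is where the factor $\sqrt{n+2}$ ultimately comes from, but that accounting has to be carried out simultaneously with the leafwise distortion control; treating the two as independent steps, as your sketch does, will not produce the stated bound.
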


\begin{proof}[Proof of Theorem~\ref{thm.young}]
In order to apply Theorem~\ref{mainthm} to $B_{\epsilon,n}(x)\cap\tilde\Omega$
we will pick $\eta\in(0,\frac{1}{2})$ below and put $N(n) =[ \mu(\bna)^{-\eta}]$. 
We then choose $m=N^\alpha$ for some $\alpha\in(\frac1{\lambda-1},1)$ (see estimate of $\mathcal{F}$ below). 
Then according to Lemma~\ref{tall.towers} $\mbox{diam}(A)\le \gamma^\frac{N}m$
for some $\gamma<1$ for all $n$-cylinders $A$ which belong to $ \tilde\Omega$. As in the 
proof of Theorem~\ref{thm2} we then conclude that
$$
 \varphi (\epsilon , \gamma_{N(n)-k} , T^kx)
  \le \frac{c_1 m^\xi}{N(n)^{\xi}}
  \le c_1 m^\xi\mu(B)^{\xi\eta}
$$
(for some constant $c_1$) provided $T^kx\not\in\mathcal{U}_{\gamma_{N(n)-k}}$ for $k=0,\dots,n-1$.
Since $s = \alpha^{-1}(C' \mu(B)) + N(n)\le c_2\mu(B)^{-\frac{1}{\lambda}} + \mu(B)^{-\eta}$, we obtain
$$
\frac{ns \cdot \varphi(\epsilon , \gamma_{N(n)-k} , T^kx)}{\mu(B)} 
 \le c_3n \cdot\mu(B)^{\eta\xi-1-\eta\alpha\xi} \left(\mu(B)^{-\frac{1}{\lambda}} + \mu(B)^{-\eta}\right).
$$
The RHS converges to zero if $\eta\xi-1-\eta\alpha\xi-\max\{\frac1\lambda,\eta\}$ is positive.
To satisfy Lemma~\ref{annulus.towers} it is required that $\xi<\frac\lambda2-1$. Then can choose
 $\alpha\in(\frac1{\lambda-1},1)$ in such a way that the above expression is positive for an $\eta<\frac12$
 close to $\frac12$.
 This can be done if $\lambda>5+\sqrt{15}$.

We now proceed as in the proof Theorem~\ref{thm2} to estimate the contribution to the error made
by  $(B\setminus\tb)\cap\tilde\Omega$. For the portion that lies in $\Omega\setminus\tilde\Omega$
we use Lemma~\ref{tall.towers} and thus obtain combining the two contributions:
$$
  \big| \mathbb{P}(\tau_{\bna}  > \frac{t}{\lambda_{\bna}\mu(\bna)} ) - e^{-t} \big|
   \le c_4\left( t \mu(B)^a +  \mu(B)^b+ \sqrt{N}\,\omega(m)\mu(B)\right)
$$
provided $x$ does not lie in the forbidden set 
$\mathcal{F}=\mathcal{V}_{N,m}\cup\bigcup_{k=0}^{n-1}T^{-k}\mathcal{U}_{\gamma_{N(n)-k}}$ whose 
measure is by Lemmata~\ref{tall.towers} and~\ref{annulus.towers}
bounded by 
$$
\mu(\mathcal{F})\le c_5\left( \sqrt{N+2}\,\omega(m)+n\abs{\log\gamma_{N(n)}}^{-\xi}\right)
\le c_6\left(\sqrt{N}\,m^{-\frac{\lambda-1}2}+nN^{-\xi}m^\xi\right)
$$
which goes to zero as $n\to\infty$ since $m=N^\alpha$ and $\frac1{\lambda-1}<\alpha<1$.
Thus $\mu(\mathcal{F})\to0$ as $n\to\infty$
and therefore
$\mathbb{P}(\tau_{\bna}  > \frac{t}{\lambda_{\bna}\mu(\bna)} ) \longrightarrow e^{-t}$
as $n\to\infty$, $\epsilon\to0$ for every $x\not\in\liminf_{n\to\infty,\epsilon\to0}\mathcal{F}_{\epsilon,n}$.

\end{proof}


\begin{thebibliography}{99}

\bibitem{Abadi01}
M. Abadi: Exponential Approximation for Hitting Times in Mixing Processes; 
Math.\ Phys.\  Electron.\ J. {\bf 7} (2001), Paper 2, 19 pp.

\bibitem{Abadi04}
M. Abadi: Sharp Error Terms and Neccessary Conditions for Exponential Hitting Times in 
Mixing Processes; Ann.\ of Probab.\ {\bf 32} (2004), 243--264.

\bibitem{Abadi06}
M. Abadi: Hitting, Returning and the Short Correlation Function; Bull.\ Braz.\ Math.\ Soc.\ (N.S.) 
{\bf 37} (2006), 593--609.

\bibitem{BC1} A D Barbour and L H Y Chen: {\it An Introduction to Stein's Method}; Lecture Notes Series, Institute for Mathematical Sciences, National University of Singapore, Vol. {\bf 4} 2005.

\bibitem{BK} M Brin and A Katok: On local entropy; Proceedings, Springer 

\bibitem{CC13} J-R Chazottes and P Collet: Poisson approximation for the number of visits to balls in nonuniformly hyperbolic dynamical systems; Ergod.\ Th.\ \& Dynam.\ Sys.\ {\bf 33} (2013), 49--80.

\bibitem{Den} M Denker: Remarks on weak limit laws for fractal sets;
{\it Progress in Probability} Vol.\ 37, Birkh\"auser 1995, 167--178.

\bibitem{DW} T Downarowicz and B Weiss: Entropy theorem along times when $x$ visits a set;
Ill.\ J. Math.\ {\bf 48} (2004), 59--69.

\bibitem{GS}
A. Galves and B. Schmitt: Inequalities for Hitting Times in Mixing Dynamical Systems;
 Random Comput.\ Dynam.\ {\bf 5} (1997), 337--347.
 
 \bibitem{H13} N T A Haydn:  Entry and return times distribution; 
Dynamical Systems: An International Journal {\bf 28(3)} (2013), 333--353.

\bibitem{HLV}
N. Haydn, Y. Lacroix and S. Vaienti: Hitting and Return Times in Ergodic Dynamical Systems;
Ann.\ of Probab.\ {\bf 33} (2005), 2043--2050.

\bibitem{HP} N Haydn and Y Psiloyenis: Return times distribution for Markov towers with 
decay of correlations; to appear in Nonlinearity.

\bibitem{HW} N Haydn and K Wasilewska: Limiting distribution and error terms for the number
 of visits to balls in non-uniformly hyperbolic dynamical systems;
  available at {\tt http://arxiv.org/abs/1402.2990}.
  
  \bibitem{KL} M Kupsa and Y Lacroix: Asymptotics for hitting times, Ann.\ of Probab.\ {\bf 33(3)} (2005), 610--614.

\bibitem{L02} Y Lacroix: Possible limit laws for entrance times of an ergodic aperiodic dynamical system;
 Israel J.\ Math.\ {\bf 132} (2002), 253--263.

\bibitem{Man} R. Ma\~n\'e: {\it Ergodic Theory and Differentiable Dynamics}; Springer 1987.

\bibitem{OW} Ornstein and Weiss; Entropy and Data Compression Schemes;
IEEE Transactions on Information Theory~39 (1993), 78--83

\bibitem{OW2} Ornstein and Weiss; Entropy and Recurrence Rates for Stationary
Random Fields; IEEE Transactions on Information Theory~48~(6) (2002), 1694--97

\bibitem{PS} F P\`ene and B Saussol: Poisson law for some 
nonuniformly hyperbolic dynamical systems with polynomial rate of mixing;
preprint Universit\'e de Bretagne Occidentale

 \bibitem{Pit} B Pitskel: Poisson law for Markov chains;
Ergod.\ Th.\  \& Dynam.\ Syst.\ {\bf 11} (1991), 501--513.

\bibitem{Var} P Varandas: Entropy and Poincar\'{e} recurrence from a geometrical viewpoint; 
Nonlinearity {\bf 22(10)} (2009), 2365--2375.

\bibitem{Wal} P. Walters: An Introduction to Ergodic Theory; Springer 1982.

\bibitem{Y2}  L-S Young: Statistical properties of dynamical systems with some hyperbolicity;
Annals of Math.\  {\bf 7} (1998), 585--650.

\bibitem{Y3}  L-S Young: Recurrence time and rate of mixing; Israel J. of Math.\ 
 {\bf 110} (1999),  153--188.



\end{thebibliography}
\end{document}